\def\geqslant{\geq}
\newcommand{\rright}{\right}
\newcommand{\lleft}{\left}
\newcommand{\rrVert}{\Vert}
\newcommand{\rrvert}{\vert}
\newcommand{\llVert}{\Vert}
\newcommand{\llvert}{\vert}
\newtheorem{theorem}{Theorem}[section]
\newcommand{\Lip}{\mathrm{Lip}}
\newtheorem{condition}{Condition}
\newtheorem{corollary}[theorem]{Corollary}
\newtheorem{lemma}[theorem]{Lemma}
\newtheorem{proposition}[theorem]{Proposition}
\begin{document}
\begin{frontmatter}

\title{Integrability and tail estimates for Gaussian rough differential equations}
\runtitle{Integrability for Gaussian RDEs}

\begin{aug}
\author[A]{\fnms{Thomas} \snm{Cass}\corref{}\thanksref{t1}\ead[label=e1]{thomas.cass@imperial.ac.uk}},
\author[A]{\fnms{Christian} \snm{Litterer}\thanksref{t2}}
\and
\author[B]{\fnms{Terry} \snm{Lyons}\thanksref{t3}}
\thankstext{t1}{Supported in part by EPSRC grant EP/F029578/1 and
Christ Church, Oxford.
Part of this research was carried out during a stay of the first two authors
at the Hausdorff Institute, Bonn.}
\thankstext{t2}{Supported by a grant of the European
Research Council (ERC Grant nr. 258237) and EPSRC Grant EP/H0005500/1.}
\thankstext{t3}{Supported by EPSRC Grants EP/F029578/1,
EP/H000100/1 and by the Oxford-Man Institute for Quantitative Finance.}
\runauthor{T. Cass, C. Litterer and T. Lyons}
\affiliation{Imperial College London, Imperial College London and\break
University of Oxford}
\address[A]{T. Cass\\
C. Litterer\\
Department of Mathematics\\
Imperial College London\\
Huxley Building\\
180 Queensgate \\
London SW7 2AZ\\
United Kingdom} 
\address[B]{T. Lyons\\
Oxford-Man Institute of Quantitative Finance\\
University of Oxford\\
Walton Well Road \\
Oxford OX2 6ED\\
United Kingdom}
\end{aug}

\received{\smonth{7} \syear{2011}}
\revised{\smonth{7} \syear{2012}}

%
\begin{abstract}
We derive explicit tail-estimates for the Jacobian of the solution flow for
stochastic differential equations driven by Gaussian rough paths. In
particular, we deduce that the Jacobian has finite moments of all order
for a
wide class of Gaussian process including fractional Brownian motion
with Hurst
parameter $H>1/4$. We remark on the relevance of such estimates to a
number of
significant open problems.
\end{abstract}

%
\begin{keyword}[class=AMS]
\kwd[Primary ]{60H10}
\kwd[; secondary ]{60G15}
\end{keyword}
\begin{keyword}
\kwd{Rough path analysis}
\kwd{Gaussian processes}
\end{keyword}

\end{frontmatter}

\section{Introduction}
Gaussian processes that are not necessarily semimartingales arise in
modeling a large variety of natural phenomena. The range of their applications
reaches from fluid dynamics (e.g., randomly forced Navier--Stokes systems~\cite{HM}), the modeling of financial markets under transaction costs
\cite{G}, to the study of internet traffic through queuing models based on
fractional Brownian motion (fBm)~\cite{GO}. These applications motivate the
study of stochastic differential equations of the form%
\begin{equation}
dY_{t}=V(Y_{t})\,dX_{t},\qquad Y ( 0 )
=y_{0},\label{Int-Eq}%
\end{equation}
driven by a Gaussian process $X$. Over the past decade extensive
progress has
been made understanding the behavior of solutions to such equations. In
particular, for the case of fBm with Hurst parameter $H>1/4$, the work
of Cass
and Friz~\cite{CF} shows the existence of the density for (\ref
{Int-Eq}) under
H\"{o}rmander's condition; Hairer et al.~\cite{BH,HP} have
shown the
smoothness of this density and established ergodicity under the regime $H>1/2$.

Various recent works (Coutin and Qian~\cite{CQ}, Ledoux, Qian and Zhang
\cite{LQZ},
Friz and Victoir~\cite{FV}, Lyons and Hambly~\cite{HL}) have explored
the use of rough
paths to understand differential equations driven by nonsemimartingale noise
processes. Within this framework we can make sense of the solutions to
(\ref{Int-Eq}) driven by a broader class of Gaussian noises than classical
analysis based on Young integration would allow. This class includes
fBm with
$H>1/4$. Thus, if we consider the flow $U_{t\leftarrow0}^{\mathbf
{X}} (
y_{0} ) \equiv Y_{t}$ of the RDE (\ref{Int-Eq}), then
under sufficient regularity on $V,$ the~map $U_{t\leftarrow0}^{\mathbf
{X}%
} ( \cdot ) $ is a differentiable function (see, e.g.,
\cite{FV}), and its derivative (``the Jacobian'')
\[
J_{t\leftarrow0}^{\mathbf{X}}(y_{0})\equiv DU_{t\leftarrow0}^{\mathbf{X}
}
( \cdot ) |_{\cdot=y_{0}}%
\]
satisfies path-by-path an RDE of linear growth driven by $\mathbf{X}$.

A careful reading of the diverse applications in~\cite{BH,HP}
reveals a surprisingly generic common obstacle to the extensions of such
results to the rough path regime. This obstacle eventually boils down
to the
need for sharp estimates on the integrability of the Jacobian of the flow
$J_{t\leftarrow0}^{\mathbf{X}}(y_{0})$. Cass, Lyons~\cite{CL} and
Inahama~\cite{I} establish such integrability for the Brownian rough
path, but
only by using the independence of the increments; for more general Gaussian
processes a more careful analysis is needed. To understand the
difficulty of
this problem, we note from~\cite{FV} that the standard deterministic estimate
on $J_{t\leftarrow0}^{\mathbf{X}}(y_{0})$ gives
%
\begin{equation}
\bigl\llvert J_{t\leftarrow0}^{\mathbf{x}}(y_{0})\bigr\rrvert \leq C
\exp \bigl( C\Vert  \mathbf{x}\Vert
_{p\mbox{-}\operatorname{var};
[ 0,T ] }^{p} \bigr).\label{standard}%
\end{equation}
The case where $\mathbf{X}$ is a Gaussian rough path and $p>2$ (i.e.,
Brownian-type paths or rougher) the Fernique-type estimates of~\cite{FO10}
unfortunately only give that $\Vert \mathbf{X}\Vert_{p\mbox{-}\operatorname{var}; [ 0,T ] }$ has a Gaussian tail. The
right-hand side of (\ref{standard}) is hence not integrable in general. Worse
still, the work Oberhauser and Friz~\cite{FO09} shows that the inequality
(\ref{standard}) can actually be saturated for a (deterministic) choice of
differential equation and driving rough path. The essential
contribution of
this paper is that for random processes having enough structure (in particular
for Gaussian processes) only a set of small (or zero) measure comes
close to
equality in (\ref{standard}). What is therefore needed (and what we provide!)
is a deterministic estimate which respects the fine structure of path, and
which allows us to more strongly interrogate its probabilistic structure.

Our results will allow us to deduce the existence of moments of all
orders for
$J_{t\leftarrow0}^{\mathbf{x}}(y_{0})$ for RDEs driven by a class of Gaussian
processes (including, but not restricted to, fBm with Hurst index
$H>1/4)$. In
fact, our main estimate shows much more than simple moment estimates. Namely,
that the logarithm of the Jacobian has a tail that decays faster than an
exponential. To be a little more precise, we will show that
%
\begin{equation}
P \bigl( \log \bigl[ \bigl\llvert J_{\cdot\leftarrow0}^{\mathbf{X}} (
y_{0} ) \bigr\rrvert _{p\mbox{-}\operatorname{var}; [ 0,T ] } \bigr] >x \bigr) \lesssim\exp
\bigl( -x^{r} \bigr) \label{tail}%
\end{equation}
for any $r<r_{0}\in(1,2]$. The constant $r_{0}$ will be described in
terms of
the regularity properties of the Gaussian path.

The results are relevant to a number of important problems. First, they are
necessary if one wants to extend the work of~\cite{HM} and
\cite{HP} on the ergodicity of non-Markovian systems. Second, they are
an important ingredient in a Malliavin calculus proof on the smoothness
of the
density for RDEs driven by rough Gaussian noise in the elliptic setting.
Furthermore, they allow one to achieve an analogue of H\"{o}rmander's theorem
for Gaussian RDEs in conjunction with a
suitable version of Norris's lemma; see~\cite{N,Nu06}. In this
context, we remark that Hu and Tindel~\cite{HT} have recently obtained a
Norris lemma for fBm with $H>1/3$ and proved smoothness-of-density
results for
a class of nilpotent RDEs. Hairer and Pillai~\cite{H3} have also proved
H\"{o}rmander-type theorems for a general class of RDEs; their results are
predicated on the assumption that the Jacobian has finite moments of all
order. Hence, one application of this paper is to use the tail estimate
(\ref{tail}) together with the results in~\cite{HT} or~\cite{H3} to conclude
that for $t>0$ the law of $Y_{t}$ [the solution to (\ref{Int-Eq})] will,
under H\"{o}rmander's condition, have a smooth density w.r.t. Lebesgue measure
on $\mathbb{R}
^{e}$, for a rich classes of Gaussian processes $X$ which includes fBm
$H>1/3$.
All of these problems (and many more besides) require the existence of
high-order moments of the Malliavin covariance matrix of $Y_{t} (
\omega ),$ which is itself expressed in terms of  the Jacobian.

The techniques developed in this paper are relevant to the study of more
general RDEs, and not just the one solved by the Jacobian. Our estimates
can be applied to any random variable that can be controlled in
terms of $N_{\alpha,p,I} ( \mathbf{\cdot} ) $, which is a ``greedy''
approximation of the local $p$-variation we will introduce later. Similar
deterministic estimates we derive can also be obtained in  the following
cases (cf. Friz, Victoir~\cite{FV}):

\begin{longlist}[(1)]
\item[(1)] RDEs driven along linear vector fields of the form
$V_{i}(z)=A_{i}%
z+b_{i}$ for $e\times e$ matrices $A_{i}$ and $b_{i}$ in $\mathbb{R}
^{e};$

\item[(2)] higher order derivatives of the flow (subject to suitably enhance
regularity on the vector fields defining the flow);

\item[(3)] the inverse of the Jacobian of the flow;

\item[(4)] situations where one wants to control the distance between
two RDE
solutions in the (inhomogeneous) rough path metric (e.g., in fixed
point theorems).
\end{longlist}

Recent work~\cite{GL} has extended the class of linear-growth RDEs for which
we have nonexplosion and there may be scope to extend our results to this
setting. In this paper we focus only on the Jacobian because of its central
role in the wide range of problems we have outlined and obtain explicit bounds
for the tails of the distribution of the Jacobian.

We now outline the structure of the paper. In Section~\ref{sec2} we introduce some
important notation and concepts on the theory of rough paths. Because
this is
now standard and there are many references available (e.g.,~\cite{L,LCL,FV,LQ}), we keep the detail to a minimum. In Section
\ref{relation-m-n} we derive a quantitative bound on the growth of
$J_{t\leftarrow0}^{\mathbf{x}}$; the estimates we derive here are based very
closely on~\cite{FV}. We end up with a control on $J_{t\leftarrow
0}^{\mathbf{x}}$ in terms of a function on the space on (rough) path space
which we (suggestively) name the accumulated $\alpha$ local $p$-variation
[denoted by $M_{\alpha,I,p} ( \cdot ) $]. When $\mathbf{X}$ is
taken to be a Gaussian rough path the integrability properties of
$M_{\alpha,I,p} ( \mathbf{X} ) $ are not immediately obvious or
easy to study. We therefore spend time in Section~\ref{relation-m-n} deriving
a relationship between $M_{\alpha,I,p} ( \cdot ) $ and another
function on path space, which we denote $N_{\alpha,p,I} ( \mathbf
{\cdot
} ) $. The analysis at this stage remains entirely deterministic.
Section~\ref{sectGRP} records some facts about Gaussian rough paths,
including the crucial embedding theorems for Cameron--Martin spaces
that have
been derived in~\cite{FV}. We then present the main tail estimate on
$N_{\alpha,p,I} ( \mathbf{X} ) $. Our analysis is based on Gaussian
isoperimetry and more specifically Borell's inequality, which we
recall. Once
this is achieved we can use the relationship between $J_{t\leftarrow
0}^{\mathbf{X}}$ and $N_{\alpha,p,I} ( \mathbf{X} ) $ to exhibit
the stated tail behavior of $J_{t\leftarrow0}^{\mathbf{X}}$. This estimate
then constitutes our main result.

\section{Rough path concepts and notation}\label{sec2}

There are now many articles and texts providing an overview on rough path
theory (e.g.,~\cite{LCL} and~\cite{FV}, to name just two). We will focus
on establishing the notation we need for the current application. We will
study continuous $\mathbb{R}^{d}$-valued paths $x$ parameterized by time on a compact interval $I$
(sometimes $I$ will be taken to be $ [ 0,T ] $), and we
denote the
space of such functions by $C ( I,
\mathbb{R}
^{d} )$. We write $x_{s,t}:=x_{t}-x_{s}$ as a shorthand for the
increments of a path when $x$ in $C ( I,
\mathbb{R}
^{d} )$. For $p\geq$ $1$ we will use%
\[
\llvert x\rrvert _{\infty}:=\sup_{t\in I}\llvert
x_{t}\rrvert,\qquad \llvert x\rrvert _{p\mbox{-}\operatorname{var};I}:=
\biggl( \sup_{D [ I ] = ( t_{j} ) }\sum_{j:t_{j}\in D [
I ] }\llvert
x_{t_{j},t_{j+1}}\rrvert ^{p} \biggr) ^{1/p},
\]
and we refer to these quantities both symbolically and by name (they are,
resp., the uniform norm and the $p$-variation semi-norm). We denote by
$C^{p\mbox{-}\operatorname{var}} ( I,
\mathbb{R}
^{d} ) $ the linear subspace of $C ( I,
\mathbb{R}
^{d} ) $ consisting of path of finite $p$-variation. In the case where
$x$ is in $C^{p\mbox{-}\operatorname{var}} ( I,%
\mathbb{R}
^{d} ) $ and $p$ is in $[1,2),$ the iterated integrals of x are
canonically defined by Young integration. The collection of all these iterated
integrals together then gives the signature: for $s<t$ in $I$%
\[
S ( x ) _{s,t}:=1+\sum_{k=1}^{\infty}
\int_{s<t_{1}<t_{2}%
<\cdots<t_{k}<t}\,dx_{t_{1}}\otimes \,dx_{t_{2}}\otimes\cdots
\otimes \,dx_{t_{k}}\in T \bigl(
\mathbb{R}
^{d} \bigr).
\]
By writing $S ( x ) _{\inf I,\cdot}$ we can regard the
signature as
a path (on $I)$ with values in the tensor algebra. In a similar way, the
truncated signature%
\[
S_{N} ( x ) _{s,t}:=1+\sum_{k=1}^{N}
\int_{s<t_{1}<t_{2}%
<\cdots<t_{k}<t}\,dx_{t_{1}}\otimes \,dx_{t_{2}}\otimes\cdots
\otimes \,dx_{t_{k}}\in T^{N} \bigl(
\mathbb{R}
^{d} \bigr)
\]
is a path in the truncated tensor algebra, $T^{N} (
\mathbb{R}
^{d} ) $. It is a well-known fact that the path $S_{N} (
x )
_{\inf I,\cdot}$ takes values in the step-$N$ free nilpotent group with $d$
generators, which we denote $G^{N} (
\mathbb{R}
^{d} ) $. More generally, if $p\geq1$ we can consider the set of such
group-valued paths%
\[
\mathbf{x}_{t}= \bigl( 1,\mathbf{x}_{t}^{1},\ldots,
\mathbf{x}_{t}^{\lfloor
p\rfloor} \bigr) \in G^{\lfloor p\rfloor} \bigl(
\mathbb{R}
^{d} \bigr).
\]
The advantage this offers is that the group structure provides a natural
notion of increment, namely $\mathbf{x}_{s,t}:=\mathbf{x}_{s}^{-1}%
\otimes\mathbf{x}_{t}$. We can describe the set of ``norms'' on
$G^{\lfloor
p\rfloor} (
\mathbb{R}
^{d} ) $ which are homogeneous with respect to the natural scaling
operation on the tensor algebra; see~\cite{FV} for definitions and details.
The subset of these so-called homogeneous norms which are symmetric and
sub-additive~\cite{FV} gives rise to genuine metrics on $G^{\lfloor
p\rfloor} (
\mathbb{R}
^{d} ),$ which in turn gives rise to a notion of homogenous
$p$-variation metrics $d_{p\mbox{-}\operatorname{var}}$ on the $G^{\lfloor p\rfloor
} (
\mathbb{R}
^{d} ) $-valued paths. Let
%
\begin{equation}
\Vert  \mathbf{x}\Vert  _{p\mbox{-}\operatorname{var}; [
0,T ] }= \Biggl(
\sum_{i=1}^{ \lfloor p \rfloor}\sup_{D= (
t_{j} ) }
\sum_{j:t_{j}\in D}\bigl\llvert \mathbf{x}_{t_{j},t_{j+1}}%
^{i}\bigr\rrvert _{ (
\mathbb{R}
^{d} ) ^{\otimes i}}^{p/i} \Biggr)
^{1/p}\label{homogeneousnorm},%
\end{equation}
and note that if (\ref{homogeneousnorm}) is finite, then $\omega (
s,t ):=\Vert \mathbf{x}\Vert
_{p\mbox{-}\operatorname{var}; [ s,t ] }^{p}$ is a control (i.e., it is a
continuous, nonnegative, super-additive function on the simplex $\Delta
_{T}= \{  ( s,t ) \dvtx0\leq s\leq t\leq T \} $ which
vanishes on the diagonal.)

The space of weakly geometric $p$-rough paths [denoted $WG\Omega
_{p} (
\mathbb{R}
^{d} ) $] is the set of paths parameterised over $I$ although this is
often implicit with values in $G^{\lfloor p\rfloor} (
\mathbb{R}
^{d} ) $ such that (\ref{homogeneousnorm}) is finite. A refinement of
this notion is the space of geometric $p$-rough paths, denoted $G\Omega
_{p} (
\mathbb{R}
^{d} ) $, which is the closure of
\[
\bigl\{ S_{ \lfloor p \rfloor} ( x ) _{\inf I,\cdot
}\dvtx x\in C^{1\mbox{-}\operatorname{var}}
\bigl( I,%
\mathbb{R}
^{d} \bigr) \bigr\}
\]
with respect to the rough path metric $d_{p\mbox{-}\operatorname{var}}$.

We will often end up considering an RDE driven by a path $\mathbf{x}$ in
$WG\Omega_{p} (
\mathbb{R}
^{d} ) $ along a collection of vector fields $V= ( V^{1},\ldots,V^{d} ) $ on $\mathbb{R}
^{e}$. And from the point of view of existence and uniqueness results, the
appropriate way to measure the regularity of the $V_{i}$s results turns
out to
be the notion of Lipschitz-$\gamma$ (short: $\Lip\mbox{-}\gamma$) in the sense of
Stein\setcounter{footnote}{3}\footnote{See~\cite{FV} and~\cite{LCL}, and note the contrast with
classical Lipschitzness.}. This notion provides a norm on the space of such
vector fields (the $\Lip\mbox{-}\gamma$ norm), which we denote $\llvert
\cdot\rrvert _{\Lip\mbox{-}\gamma},$ and for the collection of vector
fields $V$
we will often make use of the shorthand
\[
\llvert V\rrvert _{\Lip\mbox{-}\gamma}=\max_{i=1,\ldots,d}\llvert
V_{i}\rrvert _{\Lip\mbox{-}\gamma},
\]
and refer to the quantity $\llvert  V\rrvert _{\Lip\mbox{-}\gamma}$ as the
$\Lip\mbox{-}\gamma$ norm of $V$.

\section{Translated rough paths}\label{translate}

Suppose $\mathbf{x}= ( 1,\mathbf{x}^{1},\ldots,\mathbf{x}^{ \lfloor
p \rfloor} ) $ is a weakly geometric $p$-rough path. If $h$
is in
$C^{q\mbox{-}\operatorname{var}} ( I,
\mathbb{R}
^{d} ) $ and $1/p+1/q>1,$ then the cross-iterated integrals
between $h$
and $\mathbf{x}$ exists canonically\vadjust{\goodbreak} using Young integration. This gives rise
to the so-called translated rough path $T_{h}\mathbf{x}$. The definition,
which is standard, can be found, for example, in~\cite{FV} or \cite
{LQ}. A~key
technical estimate used in the paper will involve this object. Before
we state
and prove this estimate, we recall the specific structure of
$T_{h}\mathbf{x}$
at the first two nontrivial tensor levels. For levels one and two we
have%
\begin{eqnarray*}
( T_{h}\mathbf{x} ) ^{1} & =&
\mathbf{x}^{1}+h,
\\
( T_{h}\mathbf{x} ) ^{2} & =&
\mathbf{x}^{2}+\int h\otimes \,d\mathbf{x}^{1}+\int
\mathbf{x}^{1}\otimes \,dh+\int h\otimes \,dh.
\end{eqnarray*}
The higher order terms become increasingly tiresome to write down. We
will not
go beyond the levelt $(
\mathbb{R}
^{d} ) ^{\otimes3},$ so we simply record for reference that this
can be
written as%
%
\begin{eqnarray}\label{thirdlevel}
( T_{h}\mathbf{x} ) _{s,t}^{3} & =&
\mathbf{x}_{s,t}^{3}+\int%
_{s}^{t}
\int_{s}^{v}h_{s,u}\otimes
\,dh_{u}\otimes \,dh_{v}
\nonumber
\\
&&{} +\int_{s}^{t}\mathbf{x}^{2}{}_{s,u}
\otimes \,dh_{u}+\int_{s}^{t}\int
_{s} 
^{v}\mathbf{x}_{s,u}^{1}
\otimes \,dh_{u}\otimes \,d\mathbf{x}_{v}^{1}-\int
_{s}%
^{t}h_{s,u}\otimes d
\mathbf{x}^{2}{}_{u,t}
\nonumber
\\[-8pt]
\\[-8pt]
\nonumber
&&{} +\int_{s}^{t}\int_{s}^{v}h_{s,u}
\otimes \,d\mathbf{x}_{u}^{1}\otimes \,dh_{v}+\int
_{s}^{t}\int_{s}^{v}
\mathbf{x}^{1}{}_{s,u}\otimes \,dh_{u}\otimes
\,dh_{v}\\
&&{}+\int_{s}^{t}\int
_{s}^{v}h_{s,u}\otimes
\,dh_{u}\otimes \,d\mathbf{x}%
_{v}^{1}.
\nonumber
\end{eqnarray}

 The proof of the following result will occupy the remainder of this
section. The lemma is important. It explains how we can control
the $p$-variation of the translated rough path by the sum of the
$p$-variation of the
(untranslated) rough path and the $q$-variation of the path by which we translate.

\begin{lemma}
\label{translationestimates}Let $1\leq p<4$. Suppose that $\mathbf{x}$
is a
weakly geometric $p$-rough path parametrised over a compact interval
$I$. Let
$h$ be a path in $C^{q\mbox{-}\operatorname{var}} ( I,
\mathbb{R}
^{d} ) $ where $1/q+1/p>1$. If $T_{h}\mathbf{x}$ denotes the translated
rough path, then for any $ [ s,t ] \subseteq I$ we have the
estimate
\[
\Vert  T_{h}\mathbf{x}\Vert
_{p\mbox{-}\operatorname{var};%
[ s,t ] }^{p}\leq C_{p,q} \bigl[ \Vert
\mathbf{x}\Vert  _{p\mbox{-}\operatorname{var}; [ s,t ] }%
^{p}+\llvert
h\rrvert _{q\mbox{-}\operatorname{var}; [ s,t ]
}^{p} \bigr].
\]
The constant $C_{p,q}$ is given explicitly by
\[
C_{p,q}=2^{p-1}\bigl[1+c_{p,q}^{p/2}+c_{p/2,q}^{p/3}+c_{p,q}^{2p/3}
\bigr],
\]
where $c_{l,m}=2\cdot4^{1/l+1/m}\zeta ( \frac{1}{l}+\frac
{1}{m} ), $
and $\zeta$ is the classical Riemann zeta function.
\end{lemma}

\begin{pf}
We will only prove the lemma for the most difficult case $p\in\lbrack
3,4)$. By
definition we have that
\[
\Vert  T_{h}\mathbf{x}\Vert
_{p\mbox{-}\operatorname{var};%
[ s,t ] }^{p}=\bigl\llvert ( T_{h}\mathbf{x} )
^{1}\bigr\rrvert _{p\mbox{-}\operatorname{var}; [ s,t ] }^{p}+\bigl\llvert (
T_{h}\mathbf{x} ) ^{2}\bigr\rrvert _{p/2\mbox{-}\operatorname{var}; [ s,t ]
}^{p/2}+
\bigl\llvert ( T_{h}\mathbf{x} ) ^{3}\bigr\rrvert
_{p/3\mbox{-}\operatorname{var}; [ s,t ] }^{p/3},
\]
where for $i=1,2,3$ we have
\[
\bigl\llvert ( T_{h}\mathbf{x} ) ^{i}\bigr\rrvert
_{p/i\mbox{-}\operatorname{var};%
[ s,t ] }^{p/i}=\sup_{D [ s,t ] = ( t_{i} )
}\sum
_{i:t_{i}\in D [ s,t ] }\bigl\llvert ( T_{h}%
\mathbf{x} )
_{t_{i},t_{i+1}}^{i}\bigr\rrvert _{ (
\mathbb{R}
^{d} ) ^{\otimes i}}^{p/i}.
\]
Note that the formula for the translated rough path gives at level one
of the
tensor algebra%
%
\begin{eqnarray}\label{1}
\bigl\llvert ( T_{h}\mathbf{x} ) ^{1}\bigr\rrvert
_{p\mbox{-}\operatorname{var};
[ s,t ] }^{p} & \leq& \bigl[ \bigl\llvert \mathbf{x}^{1}
\bigr\rrvert _{p\mbox{-}\operatorname{var}; [ s,t ] }+\llvert h\rrvert _{q\mbox{-}\operatorname{var};%
[ s,t ] } \bigr]
^{p}
\nonumber
\\
& \leq&2^{p-1} \bigl[ \Vert  \mathbf{x}\Vert  _{p\mbox{-}\operatorname{var}; [ s,t ] }^{p}+\llvert h\rrvert _{q\mbox{-}\operatorname{var}; [ s,t ] }^{p}
\bigr]
\\
& =:&C_{1} ( p,q ) \bigl[ \Vert  \mathbf{x}%
\Vert  _{p\mbox{-}\operatorname{var}; [ s,t ] }^{p}+\llvert h\rrvert
_{q\mbox{-}\operatorname{var}; [ s,t ] }^{p} \bigr].\nonumber %
\end{eqnarray}
At level two we need to analyze%
%
\begin{eqnarray}\label{secondlevel}%
&&\sum_{i:t_{i}\in D [ s,t ] }  \biggl| \mathbf{x}_{t_{i},t_{i+1}}%
^{2}+\underbrace{\int_{t_{i}}^{t_{i+1}}h_{t_{i},u}
\otimes \,d\mathbf {x}_{u}^{1}%
}_{=:A_{t_{i},t_{i+1}}^{1}}+
\underbrace{\int_{t_{i}}^{t_{i+1}}\mathbf{x} 
_{t_{i},u}^{1}\otimes \,dh_{u}}_{=:A_{t_{i},t_{i+1}}^{2}}
\nonumber
\\[-8pt]
\\[-8pt]
\nonumber
&&\hspace*{135pt}\qquad{}+\int
_{t_{i}}^{t_{i+1}%
}h_{t_{i},u}\otimes
\,dh_{u}  \biggr| _{ (
\mathbb{R}
^{d} ) ^{\otimes2}}^{p/2}.
\end{eqnarray}
Using Young's inequality we have for $j=1,2$ that
\begin{eqnarray*}
\bigl\llvert A_{t_{i},t_{i+1}}^{j}\bigr\rrvert _{ (
\mathbb{R}
^{d} ) ^{\otimes2}}^{p/2}
& \leq& c_{p,q}^{p/2}\bigl\llvert \mathbf{x} 
^{1}\bigr\rrvert _{p\mbox{-}\operatorname{var}; [ t_{i},t_{i+1} ] }^{p/2}\llvert h\rrvert
_{q\mbox{-}\operatorname{var}; [ t_{i},t_{i+1} ] }^{p/2}
\\
& \leq&\frac{c_{p,q}^{p/2}}{2} \bigl( \bigl\llvert \mathbf{x}^{1}\bigr
\rrvert _{p\mbox{-}\operatorname{var}; [ t_{i},t_{i+1} ] }^{p}+\llvert h\rrvert _{q\mbox{-}\operatorname{var}; [ t_{i},t_{i+1} ] }^{p}
\bigr).
\end{eqnarray*}
And also
\[
\biggl\llvert \int_{t_{i}}^{t_{i+1}}h_{t_{i},u}
\otimes \,dh_{u}\biggr\rrvert _{ (
\mathbb{R}
^{d} ) ^{\otimes2}}\leq c_{p,q}
\llvert h\rrvert _{q\mbox{-}\operatorname{var};%
[ t_{i},t_{i+1} ] }^{2}.
\]
Hence, we can deduce that%
%
\begin{eqnarray}\label{crossterms}
&&\sup_{D [ s,t ] = ( t_{i} ) }\sum_{i:t_{i}\in D [
s,t ] }\bigl\llvert
A_{t_{i},t_{i+1}}^{j}\bigr\rrvert _{ (
\mathbb{R}
^{d} ) ^{\otimes2}}^{p/2}\nonumber\\
&&\qquad
\leq\frac{c_{p,q}^{p/2}}{2}\sup_{D [
s,t ] = ( t_{i} ) }\sum
_{i:t_{i}\in D [ s,t ]
} \bigl( \bigl\llvert \mathbf{x}^{1}\bigr
\rrvert _{p\mbox{-}\operatorname{var}; [
t_{i},t_{i+1} ] }^{p}+\llvert h\rrvert _{q\mbox{-}\operatorname{var}; [
t_{i},t_{i+1} ] }^{p}
\bigr)
\\
&&\qquad \leq\frac{c_{p,q}^{p/2}}{2} \bigl( \bigl\llvert \mathbf{x}^{1}\bigr
\rrvert _{p\mbox{-}\operatorname{var}; [ s,t ] }^{p}+\llvert h\rrvert _{q\mbox{-}\operatorname{var}; [ s,t ] }^{p}
\bigr),\nonumber%
\end{eqnarray}
and similarly%
%
\begin{equation}
\sup_{D [ s,t ] = ( t_{i} ) }\sum_{i:t_{i}\in D [
s,t ] }\biggl
\llvert \int_{t_{i}}^{t_{i+1}}h_{t_{i},u}\otimes
\,dh_{u}\biggr\rrvert _{ (
\mathbb{R}
^{d} ) ^{\otimes2}}^{p/2}\leq
c_{p,q}^{p/2}\llvert h\rrvert _{q\mbox{-}\operatorname{var}; [ t_{i},t_{i+1} ] }^{p}.
\label{h}%
\end{equation}
From (\ref{secondlevel}), (\ref{crossterms}) and (\ref{h}) we easily obtain
that
%
\begin{eqnarray}\label{2}
&& \bigl\llvert ( T_{h}\mathbf{x} ) ^{2}\bigr\rrvert
_{p/2\mbox{-}\operatorname{var};%
[ s,t ] }^{p/2}
\nonumber
\\
&&\qquad \leq4^{p/2-1} \bigl[ \bigl\llvert \mathbf{x}^{2}\bigr\rrvert
_{p/2\mbox{-}\operatorname{var};%
[ s,t ] }^{p/2}+c_{p,q}^{p/2} \bigl( \bigl
\llvert \mathbf{x}%
^{1}\bigr\rrvert _{p\mbox{-}\operatorname{var}; [ s,t ] }^{p}+
\llvert h\rrvert _{q\mbox{-}\operatorname{var}; [ s,t ] }^{p} \bigr)\nonumber\\
&&\hspace*{185pt}\qquad{} +c_{p,q}%
^{p/2}\llvert h\rrvert _{q\mbox{-}\operatorname{var}; [ s,t ]
}^{p} \bigr]
\\
&&\qquad \leq4^{ ( p-1 ) /2}c_{p,q}^{p/2} \bigl[ \Vert  \mathbf{x}\Vert _{p\mbox{-}\operatorname{var}; [ s,t ] }%
^{p}+
\llvert h\rrvert _{q\mbox{-}\operatorname{var}; [ s,t ]
}^{p} \bigr]
\nonumber\\
&&\qquad =:C_{2} ( p,q ) \bigl[ \Vert  \mathbf{x}%
\Vert  _{p\mbox{-}\operatorname{var}; [ s,t ] }^{p}+\llvert h\rrvert
_{q\mbox{-}\operatorname{var}; [ s,t ] }^{p} \bigr].\nonumber%
\end{eqnarray}

We finish the proof by performing a similar analysis on the third
level. We
need to bound $\llvert  ( T_{h}\mathbf{x} ) ^{3}\rrvert
_{p/3\mbox{-}\operatorname{var}; [ s,t ] }^{p/3}$. Recall that
%
\begin{eqnarray}\label{level3expanded}
( T_{h}\mathbf{x} ) _{s,t}^{3} & =&
\mathbf{x}_{s,t}^{3}+\int%
_{s}^{t}
\int_{s}^{v}h_{s,u}\otimes
\,dh_{u}\otimes \,dh_{v}
\nonumber
\\
&&{} +\underbrace{\int_{s}^{t}\mathbf{x}^{2}{}_{s,u}
\otimes \,dh_{u}}%
_{=:B_{s,t}^{1}}+\underbrace{\int
_{s}^{t}\int_{s}^{v}
\mathbf{x}_{s,u}%
^{1}\otimes \,dh_{u}
\otimes \,d\mathbf{x}_{v}^{1}}_{=:B_{s,t}^{2}}%
-
\underbrace{\int_{s}^{t}h_{s,u}\otimes d
\mathbf{x}^{2}{}_{u,t}}%
_{=:B_{s,t}^{3}}
\nonumber
\\[-8pt]
\\[-8pt]
\nonumber
&&{} +\underbrace{\int_{s}^{t}\int
_{s}^{v}h_{s,u}\otimes d
\mathbf{x}_{u}%
^{1}\otimes \,dh_{v}}_{=:C_{s,t}^{1}}+
\underbrace{\int_{s}^{t}\int_{s}%
^{v}\mathbf{x}^{1}{}_{s,u}\otimes
\,dh_{u}\otimes \,dh_{v}}_{=:C_{s,t}^{2}%
}\\
&&{}+\underbrace{\int
_{s}^{t}\int_{s}^{v}h_{s,u}
\otimes \,dh_{u}\otimes
\,d\mathbf{x}_{v}^{1}}_{=:C_{s,t}^{3}}.\nonumber
\end{eqnarray}
We can split this up by first looking at the ``pure'' terms
%
\begin{eqnarray}
&&\sum_{i:t_{i}\in D [ s,t ] }\bigl\llvert \mathbf {x}_{t_{i},t_{i+1}%
}^{3}
\bigr\rrvert _{ (
\mathbb{R}
^{d} ) ^{\otimes3}}^{p/3}  \leq\Vert
\mathbf{x}%
\Vert  _{p\mbox{-}\operatorname{var}; [ s,t ] }^{p},\label
{pure}
\\
\label{pureh}&&\sum_{i:t_{i}\in D [ s,t ] }\biggl\llvert \int_{t_{i}}^{t_{i+1}}%
\int_{t_{i}}^{v}h_{t_{i},u}\otimes
\,dh_{u}\otimes \,dh_{v}\biggr\rrvert _{ (
\mathbb{R}
^{d} ) ^{\otimes3}}^{p/3}\nonumber\\
& &\qquad\leq c_{p,q}^{2p/3}\sum_{i:t_{i}\in
D [ s,t ] }
\llvert h\rrvert _{q\mbox{-}\operatorname{var}; [
t_{t},t_{i+1} ] }^{p}
\\
&&\qquad \leq c_{p,q}^{2p/3}\llvert h\rrvert _{q\mbox{-}\operatorname{var}; [
s,t ] }^{p}.
\nonumber
\end{eqnarray}
Second, we analyze the mixed terms in (\ref{level3expanded}%
). The strategy here as before is to use Young's inequality.
For $j=1
$ or $j=3$ we have
%
\begin{eqnarray}\label{mixed1a}%
&&\sum_{i:t_{i}\in D [ s,t ] }\bigl\llvert B_{t_{i},t_{i+1}}%
^{j}\bigr\rrvert _{ (
\mathbb{R}
^{d} ) ^{\otimes3}}^{p/3}\nonumber\\
&&\qquad \leq
c_{p/2,q}^{p/3}\sum_{i:t_{i}\in
D [ s,t ] }\Vert  \mathbf{x}\Vert  _{p\mbox{-}\operatorname{var}; [ t_{i},t_{i+1} ] }^{2p/3}
\llvert h{}\rrvert _{q\mbox{-}\operatorname{var}; [ t_{i},t_{i+1} ] }^{p/3}
\nonumber
\\[-8pt]
\\[-8pt]
\nonumber
&&\qquad \leq c_{p/2,q}^{p/3}\sum_{i:t_{i}\in D [ s,t ] }
\biggl[ \frac
{2}{3}\Vert  \mathbf{x}\Vert
_{p\mbox{-}\operatorname{var}; [ t_{i},t_{i+1} ] }^{p}+\frac{1}{3}\llvert h{}\rrvert
_{q\mbox{-}\operatorname{var}; [ t_{i},t_{i+1} ] }^{p} \biggr]
\\
&&\qquad \leq\frac{2}{3}c_{p/2,q}^{p/3} \bigl[ \Vert  \mathbf{x}%
\Vert  _{p\mbox{-}\operatorname{var}; [ s,t ] }^{p}+
\llvert h{}\rrvert _{q\mbox{-}\operatorname{var}; [ s,t ] }^{p} \bigr].\nonumber
\end{eqnarray}
A similar calculation yields%
%
\begin{equation}
\sum_{i:t_{i}\in D [ s,t ] }\bigl\llvert B_{t_{i},t_{i+1}}%
^{2}\bigr\rrvert _{ (
\mathbb{R}
^{d} ) ^{\otimes3}}^{p/3}\leq
c_{p,q}^{2p/3} \biggl[ \frac{2}%
{3}\Vert  \mathbf{x}\Vert  _{p\mbox{-}\operatorname{var}; [ s,t ] }^{p}+
\frac{1}{3}\llvert h{}\rrvert _{q\mbox{-}\operatorname{var}; [ s,t ] }^{p} \biggr].
\label{mixed1b}%
\end{equation}
Finally we have for $j=1,2,3$
%
\begin{eqnarray}\label{mixed2}%
&&\sum_{i:t_{i}\in D [ s,t ] }\bigl\llvert C_{t_{i},t_{i+1}}%
^{j}\bigr\rrvert _{ (
\mathbb{R}
^{d} ) ^{\otimes3}}^{p/3}\nonumber\\
&&\qquad \leq
c_{p,q}^{2p/3}\sum_{i:t_{i}\in
D [ s,t ] }\bigl
\llvert \mathbf{x}^{1}\bigr\rrvert _{p\mbox{-}\operatorname{var}; [ t_{i},t_{i+1} ] }^{p/3}
\llvert h\rrvert _{q\mbox{-}\operatorname{var}; [ t_{i},t_{i+1} ] }^{2p/3}
\nonumber
\\[-8pt]
\\[-8pt]
\nonumber
& &\qquad\leq c_{p,q}^{2p/3}\sum_{i:t_{i}\in D [ s,t ] }
\biggl[ \frac
{1}{3}\Vert  \mathbf{x}\Vert
_{p\mbox{-}\operatorname{var}; [ t_{i},t_{i+1} ] }^{p}+\frac{2}{3}\llvert h{}\rrvert
_{q\mbox{-}\operatorname{var}; [ t_{i},t_{i+1} ] }^{p} \biggr]
\\
&&\qquad \leq c_{p,q}^{2p/3} \biggl[ \frac{1}{3}\Vert  \mathbf{x}%
\Vert  _{p\mbox{-}\operatorname{var}; [ s,t ] }^{p}+
\frac{2}%
{3}\llvert h{}\rrvert _{q\mbox{-}\operatorname{var}; [ s,t ]
}^{p}
\biggr].\nonumber
\end{eqnarray}
Using the fact that the estimates (\ref{pure}), (\ref{pureh}), (\ref
{mixed1a}), (\ref{mixed1b}) and (\ref{mixed2}) are uniform over all
partitions, we derive the the elementary bound
\[
\bigl\llvert ( T_{h}\mathbf{x} ) ^{3}\bigr\rrvert
_{p/3\mbox{-}\operatorname{var};
[ s,t ] }^{p/3}\leq8^{p/3-1} \bigl[ (\ref{pure})+(
\ref{pureh}%
)+(\ref{mixed1a})+ (~\ref{mixed1b} ) +(\ref{mixed2}) \bigr].
\]
This then yields
%
\begin{eqnarray}\label{3}%
&&\bigl\llvert ( T_{h}\mathbf{x} ) ^{3}\bigr\rrvert
_{p/3\mbox{-}\operatorname{var};%
[ s,t ] }^{p/3}\nonumber\\
&&\qquad \leq8^{p/3-1}\biggl[
\frac{4}{3}c_{p/2,q}^{p/3}%
+
\frac{10}{3}c_{p,q}^{2p/3}\biggr] \bigl[\Vert  \mathbf{x}%
\Vert  _{p\mbox{-}\operatorname{var}; [ s,t ] }^{p}+
\llvert h\rrvert _{q\mbox{-}\operatorname{var}; [ s,t ] }^{p} \bigr]
\nonumber
\\[-8pt]
\\[-8pt]
\nonumber
&&\qquad \leq8^{ ( p-1 ) /3} \bigl[ c_{p/2,q}^{p/3}+c_{p,q}^{2p/3}
\bigr] \bigl[ \Vert  \mathbf{x}\Vert
_{p\mbox{-}\operatorname{var}; [ s,t ] }^{p}+\llvert h\rrvert _{q\mbox{-}\operatorname{var}; [
s,t ] }^{p}
\bigr]
\\
&&\qquad =:C_{3} ( p,q ) \bigl[ \Vert  \mathbf{x}%
\Vert  _{p\mbox{-}\operatorname{var}; [ s,t ] }^{p}+\llvert h\rrvert
_{q\mbox{-}\operatorname{var}; [ s,t ] }^{p} \bigr].\nonumber
\end{eqnarray}

Putting together (\ref{1}), (\ref{2}) and (\ref{3}) we establish that
\[
\Vert  T_{h}\mathbf{x}\Vert
_{p\mbox{-}\operatorname{var};%
[ s,t ] }^{p}\leq\sum_{i=1,2,3} \bigl\{
C_{i} ( p,q ) \bigr\} \bigl[ \Vert  \mathbf{x}\Vert  _{p\mbox{-}\operatorname{var}; [ s,t ] }^{p}+\llvert h\rrvert _{q\mbox{-}\operatorname{var}; [ s,t ] }^{p}
\bigr],
\]
and since $\sum_{i=1,2,3} \{ C_{i} ( p,q )  \}
=2^{p-1}+4^{ ( p-1 ) /2}c_{p,q}^{p/2}+8^{ ( p-1 )
/3}[c_{p/2,q}^{p/3}+c_{p,q}^{2p/3}]$ the estimate follows.
\end{pf}

\section{Deterministic estimates for solutions to RDEs}\label{relation-m-n}

In this section we will develop the pathwise estimate obtained in the previous
section. To assist with the clarity of the presentation it will be important
to first introduce some definitions of the main objects featuring in
our discussion.

\begin{notation}
If $\mathbf{x}$ is a weakly geometric $p$-rough path, then we will let
$\omega_{\mathbf{x,}p}$ denote the control which is induced by $\mathbf
{x}$ in
the sense that
\[
\omega_{\mathbf{x,}p} ( s,t ) \equiv\Vert  \mathbf{x}\Vert  _{p\mbox{-}\operatorname{var}; [ s,t ] }^{p}.
\]
\end{notation}

\begin{definition}
Let $\alpha>0$ and $I\subseteq
\mathbb{R}$ be a compact interval. Suppose that $\omega\dvtx I\times I\rightarrow%
\mathbb{R}
^{+}$ is a control. We define the accumulated $\alpha$-local $\omega
$-variation by%
\[
M_{\alpha,I} ( \omega ) =\mathop{\sup_{D ( I )
= ( t_{i} ) }}_{\omega ( t_{i},t_{i+1} ) \leq\alpha}%
\sum_{i:t_{i}\in D ( I ) }\omega ( t_{i},t_{i+1}
).
\]
\end{definition}

\begin{remark}
\label{scaling}Note that we have the scaling property $\beta M_{\alpha
/\beta,I} ( \omega ) =  M_{\alpha,I} ( \beta\omega ) $ for
any $\beta>0$.
\end{remark}

Of special interest is the case when the control is induced (in the
sense of
the above notation) by a (weakly) geometric $p$-rough path.

\begin{definition}
Let $\alpha>0$ and $I\subseteq%
\mathbb{R}
$ be a compact interval. We define the accumulated $\alpha$-local
$p$-variation to be the nonnegative function $M_{\alpha,I,p}$ which
acts on
weakly geometric $p$-rough paths (parameterized over $I$) by
%
\begin{equation}
M_{\alpha,I,p} ( \mathbf{x} ) \equiv M_{\alpha,I} ( \omega_{\mathbf{x,}p}
).\label{a-loc-def}%
\end{equation}
\end{definition}

\begin{remark}
The function $M_{\alpha,I,p}$ is well-defined because the
super-additivity of the control $\omega_{\mathbf{x,}p}$ ensures that
\[
M_{\alpha,I,p} ( \mathbf{x} ) \leq\Vert  \mathbf{x}\Vert  _{p\mbox{-}\operatorname{var};I}^{p} <\infty
\]
for any weakly geometric rough path $\mathbf{x}$ (again, parameterized over
$I$).\break $M_{\alpha,I,p} ( \mathbf{x} ) $ is continuous and
increasing in $\alpha$, and it equals $\Vert \mathbf{x}%
\Vert _{p\mbox{-}\operatorname{var};I}^{p}$ whenever $\alpha\geq\Vert\mathbf{x}\Vert _{p\mbox{-}\operatorname{var};I}^{p} $.
\end{remark}

The following lemma shows how the $\alpha$-local $\omega$-variation can be
used to derive Lipschitz bounds on solutions to RDEs.\footnote{It should be
compared with Theorem 10.26 of~\cite{FV}, on which the proof is based.}

\begin{lemma}
Assume $\lfloor p\rfloor+1\geqslant\gamma>p\geq1$. Suppose that $\mathbf{x}$
is a weakly geometric $p$-rough path parameterized on $ [ 0,T
],$
and $V= ( V^{1},\ldots,V^{d} ) $ is a collection of $\Lip\mbox{-}\gamma$
vector fields on $\mathbb{R}
^{e}$. Let $ (U_{t\leftarrow0}^{\mathbf{x}} ( \cdot )
)_{t\in [ 0,T ] }$ denote the flow induced by the RDE%
\[
dy_{t}=V ( y_{t} ) \,d\mathbf{x}_{t},\qquad
y ( 0 ) =y_{0},
\]
so that $U_{\cdot\leftarrow0}^{\mathbf{x}} ( y_{0} ) \equiv
y$. If
$\omega$ is the control $\omega ( u,v ) \equiv|V|_{\Lip\mbox{-}\gamma}%
^{p}\omega_{\mathbf{x,}p} ( u,v ),$ then for any $y_{0}^{1}$ and
$y_{0}^{2}$ in $%
\mathbb{R}
^{e}$, any $\alpha>0$ and any $ [ s,t ] \subseteq [
0,T ], $ we have
%
\begin{eqnarray}\label{FVbase}
\qquad&& \bigl\llvert U_{\cdot\leftarrow0}^{\mathbf{x}} \bigl( y_{0}^{1}
\bigr) -U_{\cdot\leftarrow0}^{\mathbf{x}} \bigl( y_{0}^{2}
\bigr) \bigr\rrvert _{p\mbox{-}\operatorname{var}; [ s,t ] }
\nonumber
\\[-8pt]
\\[-8pt]
\nonumber
&&\qquad \leq C\llvert V\rrvert _{\Lip\mbox{-}\gamma}\Vert  \mathbf{x}\Vert  _{p\mbox{-}\operatorname{var}; [ s,t ] }\bigl\llvert y_{0}^{1}-y_{0}^{2}
\bigr\rrvert \exp \bigl[ C\max \bigl( 1,\alpha ^{-1} \bigr)
M_{\alpha, [ 0,T ] } ( \omega ) \bigr],
\end{eqnarray}
where $C$ is a constant depending only on $p$.
\end{lemma}

\begin{pf}
Let $y_{t}^{i}\equiv U_{t\leftarrow0}^{\mathbf{x}} ( y_{0}^{i}
) $
for $i=1,2$. We follow through the details in the proof of Theorem
10.26 of
\cite{FV}, with the exception that we enhance each application of their Lemma
10.63 by instead using Remark 10.64 of the same reference. The contents of
this remark can be improved so that we use an arbitrary truncation parameter
$\alpha$ rather than setting $\alpha=1$; the details are easily checked
and we
omit them. These calculations result in the following estimate:%
%
\begin{equation}\quad
\bigl\llvert y_{s,t}^{1}-y_{s,t}^{2}
\bigr\rrvert \leq C\bigl\llvert y_{0}^{1} 
-y_{0}^{2}\bigr\rrvert \omega ( s,t ) ^{1/p}\exp
\bigl[ C\max \bigl( 1,\alpha^{-1} \bigr) M_{\alpha, [ 0,T ] } ( \omega )
\bigr].\label{alphaone}%
\end{equation}
The estimate (\ref{FVbase}) then follows by an elementary computation.
\end{pf}

Using these Lipschitz estimates on the flow, it is a relatively simple matter
to derive growth bounds on the Jacobian. This is the content of the
following corollary.

\begin{corollary}
\label{Jacobianbound}Assume $\lfloor p\rfloor+1>\gamma>p\geq1$. Suppose that
$\mathbf{x}$ is a weakly geometric $p$-rough path parameterized on
$ [
0,T ],$ and $V= ( V^{1},\ldots,V^{d} ) $ is a collection of
$\Lip\mbox{-}\gamma$ vector fields on $%
\mathbb{R}
^{e}$. Let $ (U_{t\leftarrow0}^{\mathbf{x}} ( \cdot )
)_{t\in [ 0,T ] }$ denote the flow induced by the RDE%
\[
dy_{t}=V ( y_{t} ) \,d\mathbf{x}_{t},\qquad
y ( 0 ) =y_{0}.
\]
Then the derivative $J_{t\leftarrow0}^{\mathbf{x}} ( y_{0} ) $ of
$U_{t\leftarrow0}^{\mathbf{x}} ( y_{0} ) $ exists and
satisfies the
growth-bound%
%
\begin{eqnarray}\label{finalbound}
\qquad&&\bigl\llvert J_{\cdot\leftarrow0}^{\mathbf{x}} ( y_{0} ) \bigr\rrvert
_{p\mbox{-}\operatorname{var}; [ 0,T ] }
\nonumber
\\[-8pt]
\\[-8pt]
\nonumber
&&\qquad\leq C\llvert V\rrvert
_{\Lip\mbox{-}\gamma}\Vert
\mathbf{x}\Vert  _{p\mbox{-}\operatorname{var}; [ 0,T ] }\exp \bigl[ C\max \bigl( \llvert V
\rrvert _{\Lip\mbox{-}\gamma}^{p},\alpha^{-1} \bigr)
M_{\alpha, [ 0,T ],p} ( \mathbf{x} ) \bigr].
\end{eqnarray}
\end{corollary}

\begin{pf}
It is well known~\cite{FV} under these hypotheses that $U_{\cdot
\leftarrow0}^{\mathbf{x}} ( y_{0} ) $ is differentiable. Fix
$\alpha>0$ and define
\[
\delta:=\alpha\llvert V\rrvert _{\Lip\mbox{-}\gamma}^{p}.
\]
Let $h$ be in $
\mathbb{R}
^{e},$ and for a real number $\varepsilon$ let $y_{0}^{1}=y_{0}+\varepsilon
h$ and
$y_{0}^{2}=y_{0}$. Take $U_{\cdot\leftarrow0}^{\mathbf{x}} ( y_{0}%
^{i} ) \equiv y^{i}$ for $i=1,2$. Applying the previous lemma we obtain
that for any $ [ s,t ] \subseteq [ 0,T ] $
\[
\bigl\llvert y_{s,t}^{1}-y_{s,t}^{2}
\bigr\rrvert ^{p}\leq C^{p}\llvert V\rrvert
_{\Lip\mbox{-}\gamma}^{p}\Vert  \mathbf{x}\Vert
_{p\mbox{-}\operatorname{var}; [ s,t ] }^{p}\varepsilon^{p}\llvert h\rrvert
^{p}\exp \bigl[ Cp\max \bigl( 1,\delta^{-1} \bigr)
M_{\delta, [ 0,T ] } ( \omega ) \bigr].
\]

Dividing by $\varepsilon^{p}$, taking the limit as $\varepsilon\downarrow0$
and then
taking the supremum over all $\llvert  h\rrvert =1$ this estimate
becomes%
%
\begin{eqnarray}\label{Jacinc}%
\qquad&&\bigl\llvert J_{t\leftarrow0}^{\mathbf{x}} ( y_{0} )
-J_{s\leftarrow
0}^{\mathbf{x}} ( y_{0} ) \bigr\rrvert ^{p}
\nonumber
\\[-8pt]
\\[-8pt]
\nonumber
&&\qquad
\leq C^{p}\llvert V\rrvert _{\Lip\mbox{-}\gamma}^{p}\Vert  \mathbf{x}\Vert  _{p\mbox{-}\operatorname{var}; [ s,t ] }^{p}\exp \bigl[
Cp\max \bigl( 1,\delta^{-1} \bigr) M_{\delta, [ 0,T ] } ( \omega ) \bigr].
\end{eqnarray}
Fix an arbitrary partition $D$ of $ [ 0,T ] $. Then by summing the
terms in (\ref{Jacinc}) and using the super-additivity of $\omega
_{\mathbf{x,}p}$ it follows that
\begin{eqnarray*}
&& \biggl( \sum_{i:t_{i}\in D}\bigl\llvert
J_{t_{i+1}\leftarrow0}^{\mathbf
{x}%
} ( y_{0} ) -J_{t_{i}\leftarrow0}^{\mathbf{x}}
( y_{0} ) \bigr\rrvert ^{p} \biggr) ^{1/p}
\\
&&\qquad \leq C\llvert V\rrvert _{\Lip\mbox{-}\gamma}\Vert  \mathbf{x}\Vert t _{p\mbox{-}\operatorname{var}; [ 0,T ] }\exp \bigl[ C\max \bigl( 1,\delta^{-1}
\bigr) M_{\delta, [ 0,T ] } ( \omega ) \bigr].
\end{eqnarray*}
To finish the proof we first optimize over all partitions $D$ to give an
estimate on the $p$-variation. We then use the scaling property in Remark
\ref{scaling} and the definition of $M_{\alpha,I,p} ( \mathbf
{x} )
$ to obtain that%
\[
M_{\delta, [ 0,T ] } ( \omega ) =\frac{\delta}{\alpha
}M_{\alpha, [ 0,T ] } \biggl(
\frac{\alpha}{\delta}\omega \biggr) =\llvert V\rrvert _{\Lip\mbox{-}\gamma}^{p}M_{\alpha,I,p}
( \mathbf {x}%
).
\]
Putting everything together gives (\ref{finalbound}).
\end{pf}

We have succeeded in showing how the derivative of the flow can be controlled
by using the function $M_{\alpha,I,p} ( \mathbf{\cdot} ) $.
But it
is still not obvious how to get a handle on the tail behavior of
$M_{\alpha,I,p} ( \mathbf{\cdot} ) $ when we evaluate it at a
Gaussian $p$-rough path.  To expose the structure further, we will now
consider another function $N_{\alpha,I,p} ( \cdot ) $ on
$WG\Omega_{p} (
\mathbb{R}
^{d} ),$ which is closely related to $M_{\alpha,I,p} (
\mathbf{\cdot} ) $.  The following sequence will play an important
role in enabling us to achieve this.

\begin{definition}[(The greedy sequence)] Assume $\mathbf{x\in}$ $WG\Omega_{p} (
\mathbb{R}
^{d} ) $ is parameterized over a compact interval $I$. If $\alpha
>0$ we
define a nondecreasing sequence $ ( \tau_{i} ( \alpha,p,\mathbf{x} )  ) _{i=0}^{\infty}= ( \tau_{i} (
\alpha )  ) _{i=0}^{\infty}$ in $I$ in by
%
\begin{eqnarray}\label{stoppingtimes}
\tau_{0} ( \alpha ) & =&\inf I,
\nonumber
\\[-8pt]
\\[-8pt]
\nonumber
\tau_{i+1} ( \alpha ) & =&\inf \bigl\{ t\dvtx \Vert
\mathbf{x}\Vert  _{p\mbox{-}\operatorname{var}; [ \tau_{i},t ]
}^{p}\geq\alpha,
\tau_{i} ( \alpha ) <t\leq\sup I \bigr\} \wedge\sup I,
\nonumber
\end{eqnarray}
with the convention that $\inf\varnothing=+\infty$. We call this
sequence the
greedy sequence.
\end{definition}

\begin{remark}
Note that for $\tau_{i} ( \alpha ) <$ $\sup I$ and $\Vert \mathbf{x}\Vert _{p\mbox{-}\operatorname{var}; [ \tau
_{i} ( \alpha ),\sup I ] }^{p}\geq\alpha,$ $\tau
_{i+1} ( \alpha ) $ is intuitively the first time $\Vert \mathbf{x}\Vert _{p\mbox{-}\operatorname{var}; [ \tau
_{i} ( \alpha ),\cdot ] }^{p}$ reaches $\alpha$ (recall that
the \mbox{$p$-variation} is a continuous function).
\end{remark}

We want to show that the greedy sequence is actually a partition of~$I$; in
other words it has only a finite number of distinct terms which include the
endpoints. With this objective in mind we introduce the function
$N_{\alpha,I,p}\dvtx WG\Omega_{p} (
\mathbb{R}
^{d} ) \rightarrow%
\mathbb{R}
_{+}$ given by
%
\begin{equation}
N_{\alpha,I,p} ( \mathbf{x} ):=\sup \bigl\{ n\in%
\mathbb{N}
\cup \{ 0 \} \dvtx\tau_{n} ( \alpha
) <\sup I \bigr\}.\label{numberofjumps}%
\end{equation}
We note that $N_{\alpha,I,p}$ describes the size of the nontrivial part of
the sequence $ ( \tau_{i} ( \alpha )  ) _{i=0}^{\infty}$.
More precisely, the number of distinct terms in the sequence $ (
\tau_{i} ( \alpha )  ) _{i=0}^{\infty}$ equals $N_{\alpha,I,p} ( \mathbf{x} ) +1$. The partition of the interval given by
\[
\bigl\{ \tau_{i} ( \alpha ) \dvtx i=0,1,\ldots,N_{\alpha,I,p} (
\mathbf{x} ) +1 \bigr\}
\]
can now heuristically be thought of as a ``greedy'' approximation to the supremum in identity
(\ref{a-loc-def}), the definition of the accumulated
$\alpha$-local $p$-variation.

\begin{lemma}
For any $\alpha>0,$ $p\geq1$ and any compact interval $I$ the function
$N_{\alpha,I,p}\dvtx WG\Omega_{p} (
\mathbb{R}
^{d} ) \rightarrow%
\mathbb{R}
_{+}$ is well defined; that is, $N_{\alpha,I,p} ( \mathbf{x} )
<\infty$ whenever $\mathbf{x}$ is in $WG\Omega_{p} (
\mathbb{R}
^{d} ) $.
\end{lemma}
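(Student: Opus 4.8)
The plan is to prove the quantitative bound
\[
N_{\alpha,I,p}\left(  \mathbf{x}\right)  \leq\frac{1}{\alpha}\left\vert
\left\vert \mathbf{x}\right\vert \right\vert _{p\text{-var};I}^{p},
\]
whose right-hand side is finite for every $\mathbf{x}\in WG\Omega_{p}\left(
\mathbb{R}^{d}\right)  $ since, by definition of a weakly geometric $p$-rough
path, $\omega_{\mathbf{x},p}\left(  \inf I,\sup I\right)  =\left\vert
\left\vert \mathbf{x}\right\vert \right\vert _{p\text{-var};I}^{p}<\infty$.

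If $N_{\alpha,I,p}\left(  \mathbf{x}\right)  =0$ there is nothing to prove, so
fix an integer $n$ with $1\leq n\leq N_{\alpha,I,p}\left(  \mathbf{x}\right)
$; by the definition of $N_{\alpha,I,p}$ this forces $\tau_{i}\left(
\alpha\right)  <\sup I$ for all $0\leq i\leq n$, and in particular each
$\tau_{i}\left(  \alpha\right)  $ is a genuine (finite) infimum. The key local
observation, which I would establish first, is that
\[
\omega_{\mathbf{x},p}\left(  \tau_{i}\left(  \alpha\right)  ,\tau_{i+1}\left(
\alpha\right)  \right)  =\alpha\qquad\text{for every }0\leq i\leq n-1.
\]
Indeed, the map $t\mapsto\omega_{\mathbf{x},p}\left(  \tau_{i}\left(
\alpha\right)  ,t\right)  =\left\vert \left\vert \mathbf{x}\right\vert
\right\vert _{p\text{-var};\left[  \tau_{i}\left(  \alpha\right)  ,t\right]
}^{p}$ is continuous (being a section of the control $\omega_{\mathbf{x},p}$)
and vanishes at $t=\tau_{i}\left(  \alpha\right)  $; since $\tau_{i+1}\left(
\alpha\right)  <\sup I$ it coincides with $\inf\{t>\tau_{i}\left(
\alpha\right)  :\omega_{\mathbf{x},p}\left(  \tau_{i}\left(  \alpha\right)
,t\right)  \geq\alpha\}$, so $\omega_{\mathbf{x},p}\left(  \tau_{i}\left(
\alpha\right)  ,t\right)  <\alpha$ for $t<\tau_{i+1}\left(  \alpha\right)  $
while $\omega_{\mathbf{x},p}\left(  \tau_{i}\left(  \alpha\right)
,t_{k}\right)  \geq\alpha$ along some sequence $t_{k}\downarrow\tau
_{i+1}\left(  \alpha\right)  $; letting $t\uparrow\tau_{i+1}\left(
\alpha\right)  $ and $k\to\infty$ and invoking continuity gives the claim (and
en route the fact that the $\tau_{i}\left(  \alpha\right)  $ are strictly
increasing).

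I would then conclude by super-additivity of the control $\omega
_{\mathbf{x},p}$ along the partition $\inf I=\tau_{0}\left(  \alpha\right)
<\tau_{1}\left(  \alpha\right)  <\cdots<\tau_{n}\left(  \alpha\right)  \leq
\sup I$:
\[
n\alpha=\sum_{i=0}^{n-1}\omega_{\mathbf{x},p}\left(  \tau_{i}\left(
\alpha\right)  ,\tau_{i+1}\left(  \alpha\right)  \right)  \leq\omega
_{\mathbf{x},p}\left(  \tau_{0}\left(  \alpha\right)  ,\tau_{n}\left(
\alpha\right)  \right)  \leq\omega_{\mathbf{x},p}\left(  \inf I,\sup I\right)
=\left\vert \left\vert \mathbf{x}\right\vert \right\vert _{p\text{-var};I}
^{p}.
\]
As this holds for every $n\leq N_{\alpha,I,p}\left(  \mathbf{x}\right)  $, the
displayed bound follows. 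The argument is entirely elementary; the only point
requiring any care is the continuity step showing that each "greedy" increment
carries exactly $\omega_{\mathbf{x},p}$-mass $\alpha$ rather than overshooting,
which is precisely where one uses that $\mathbf{x}$ is a weakly geometric
$p$-rough path so that $\omega_{\mathbf{x},p}$ is a bona fide control.
Equivalently, one may phrase this as a proof by contradiction: were
$N_{\alpha,I,p}\left(  \mathbf{x}\right)  $ infinite, the strictly increasing
sequence $\left(  \tau_{i}\left(  \alpha\right)  \right)  _{i\geq0}$ would
remain in the compact interval $I$ while accumulating infinite $\omega
_{\mathbf{x},p}$-mass, contradicting $\left\vert \left\vert \mathbf{x}
\right\vert \right\vert _{p\text{-var};I}^{p}<\infty$.
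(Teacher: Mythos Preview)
Your proof is correct and follows essentially the same approach as the paper: both arguments use continuity of $t\mapsto\omega_{\mathbf{x},p}(\tau_i(\alpha),t)$ to show each greedy interval carries exactly $\omega_{\mathbf{x},p}$-mass $\alpha$, then invoke super-additivity of the control to obtain the bound $\alpha N_{\alpha,I,p}(\mathbf{x})\leq\|\mathbf{x}\|_{p\text{-var};I}^{p}$. Your version is slightly more careful in that you work with an arbitrary $n\leq N_{\alpha,I,p}(\mathbf{x})$ before passing to the supremum, which avoids any circularity about whether $N_{\alpha,I,p}(\mathbf{x})$ is finite; the paper's proof tacitly assumes this but the gap is harmless.
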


\begin{pf}
From the continuity of $\Vert \mathbf{x}\Vert _{p\mbox{-}\operatorname{var}; [ s,\cdot ] }$ we can deduce that%
\[
\Vert  \mathbf{x}\Vert  _{p\mbox{-}\operatorname{var}; [
\tau_{i-1} ( \alpha ),\tau_{i} ( \alpha )  ]
}^{p}=
\alpha\qquad\mbox{for }i=1,2,\ldots,N_{\alpha,I,p} ( \mathbf{x} ).
\]
Thus, the super-additivity of $\omega_{\mathbf{x,}p}$ implies that if
$\mathbf{x}$ is in $WG\Omega_{p} (
\mathbb{R}
^{d} ), $ then
\begin{eqnarray*}
\alpha N_{\alpha,I,p} ( \mathbf{x} ) &=&\sum_{i=1}^{N_{\alpha,I,p} ( \mathbf{x} ) }
\omega_{\mathbf{x},p} \bigl( \tau _{i-1} ( \alpha ),
\tau_{i} ( \alpha ) \bigr) \leq\omega_{\mathbf{x},p} \bigl( 0,
\tau_{N_{\alpha,I,p} ( \mathbf{x}%
) } ( \alpha ) \bigr) \\
&\leq&\Vert  \mathbf{x}\Vert  _{p\mbox{-}\operatorname{var}; [ 0,T ] }%
^{p}<\infty.
\end{eqnarray*}
\upqed\end{pf}

\begin{corollary}
\label{canonicalpartition}Let $\mathbf{x}$ be a path in $WG\Omega
_{p} (
\mathbb{R}
^{d} ) $ and suppose $\alpha>0$. Define the sequence $ ( \tau
_{i} ( \alpha )  ) _{i=0}^{\infty}$ by (\ref{stoppingtimes}),
and let $N_{\alpha,I,p} ( \mathbf{x} ) $ be given by
(\ref{numberofjumps}). Then the set
\[
D_{\tau}= \bigl\{ \tau_{i} ( \alpha ) \dvtx
i=0,1,\ldots,N_{\alpha,I,p} ( \mathbf{x} ) +1 \bigr\}
\]
is a partition of $I$.
\end{corollary}

\begin{pf}
This now follows immediately from the definition of $ ( \tau
_{i} (
\alpha )  ) _{i=0}^{\infty}$ and the fact that $N_{\alpha,I,p} ( \mathbf{x} ) $ is finite.\vadjust{\goodbreak}
\end{pf}

 The following proposition shows how we can use $N_{\alpha,I,p} (
\mathbf{x} ) $ to bound on the $\alpha$-local $p$-variation.

\begin{proposition}
$\label{NandM}$Let $p\geq1$ and suppose $\mathbf{x}$ is a path in
$WG\Omega_{p} (
\mathbb{R}
^{d} ) $ parameterized over the compact interval $I,$ and then for every
$\alpha>0$%
\[
M_{\alpha,I,p} ( \mathbf{x} ) \leq \bigl( 2N_{\alpha,I,p} ( \mathbf{x} ) +1
\bigr) \alpha.
\]
\end{proposition}

\begin{pf}
First note that the case $N_{\alpha,I,p} ( \mathbf{x} ) =0$
can be
dealt with trivially. We therefore can (and will) assume in the following
that $N_{\alpha,I,p} ( \mathbf{x} ) \geq1$. Let $D= \{
t_{i}\dvtx i=0,1,\ldots,n \} $ be any partition of $I$ with the
property that
%
\begin{equation}
\omega_{\mathbf{x},p} ( t_{i-1},t_{i} ) \leq\alpha\qquad\mbox{for all }i=1,\ldots,n.\label{assumptiononD}%
\end{equation}
Corollary~\ref{canonicalpartition} ensures that $D_{\tau}$ is a
partition of
$I$. We relabel the points in $D$ with reference to the partition
$D_{\tau}$
by writing $t_{i}=t_{j}^{l}$ for $i=1,2,\ldots,n,$ where $l$ indicates
which of
disjoint subintervals $ \{ (\tau_{i} ( \alpha ),\tau
_{i+1} ( \alpha ) ]\dvtx i=0,1,\ldots,N_{\alpha,I,p} ( \mathbf
{x}%
)  \} $ contains $t_{i},$ and $j$ orders the $t_{i}$s within each
of these subintervals. More precisely, $l\in \{ 0,1,\ldots,N_{\alpha,I,p} ( \mathbf{x} )  \} $ is the unique natural number such
that
\[
\tau_{l} ( \alpha ) <t_{i}\leq\tau_{l+1} ( \alpha
);
\]
and then $j\geq1$ is well defined by%
\[
j=i-\max_{t_{r}\leq\tau_{l} ( \alpha ) }r.
\]
For each $l\in \{ 0,1,\ldots,N_{\alpha,I,p} ( \mathbf{x} )
\} $ let $n_{l}$ denote the number of elements of $D$ in $(\tau
_{l} ( \alpha ),\tau_{l+1} ( \alpha ) ]$. Suppose now
for a contradiction that $n_{l}=0$. In this case, $t_{n_{l-1}}^{l-1}$ and
$t_{1}^{l+1}$ are two consecutive points of $D$ with $t_{n_{l-1}}^{l-1}%
\leq\tau_{l} ( \alpha ) <\tau_{l+1} ( \alpha )
<t_{1}^{l+1},$ and since the $ ( \tau_{i} ( \alpha )  )
_{i=0}^{\infty}$ are defined to be maximal [recall (\ref
{stoppingtimes})] we
have
\[
\omega_{\mathbf{x,}p} \bigl( t_{n_{l-1}}^{l-1},t_{1}^{l+1}
\bigr) >\omega_{\mathbf{x,}p} \bigl( \tau_{l} ( \alpha ),\tau
_{l+1} ( \alpha ) \bigr) =\alpha.
\]
This contradicts the assumptions on $D$ (\ref{assumptiononD}). We deduce
that $n_{l}\geq1$.

We observe that if $n_{l}\geq2$, then the super-additivity of $\omega
_{\mathbf{x,}p}$ results in
\[
\sum_{j=1}^{n_{l}-1}\omega_{\mathbf{x,}p}
\bigl( t_{j}^{l},t_{j+1}^{l} \bigr)
\leq\omega_{\mathbf{x,}p} \bigl( t_{1}^{l},t_{n_{l}}^{l}
\bigr) \qquad\mbox{for }l=0,1,\ldots,N_{\alpha,I,p} ( \mathbf{x} );
\]
thus, by a simple calculation we have%
%
\begin{eqnarray}\label{keyestimate}
&& \sum_{j=1}^{n}\omega_{\mathbf{x,}p} (
t_{j-1},t_{j} )
\nonumber
\\
&&\qquad \leq\sum_{l=0}^{N_{\alpha,I,p} ( \mathbf{x} ) -1} \bigl\{ \bigl[
\omega_{\mathbf{x,}p} \bigl( t_{n_{l}}^{l},t_{1}^{l+1}
\bigr) +\omega _{\mathbf{x,}p} \bigl( t_{1}^{l+1},t_{n_{l+1}}^{l+1}
\bigr) \bigr] 1_{ \{ n_{l+1}\geq2 \} }
\nonumber
\\[-8pt]
\\[-8pt]
\nonumber
&&\hspace*{134pt}\qquad{}+\omega_{\mathbf{x,}p} \bigl(
t_{n_{l}}%
^{l},t_{n_{l+1}}^{l+1}
\bigr) 1_{ \{ n_{l+1}=1 \} } \bigr\}
\\
&&\qquad\quad{} +\omega_{\mathbf{x,}p} \bigl( 0,t_{n_{0}}^{0} \bigr).\nonumber
\end{eqnarray}
To complete the proof we note that $\omega_{\mathbf{x,}p} ( t_{1}%
^{l+1},t_{n_{l+1}}^{l+1} ) \leq\alpha$ and $\omega_{\mathbf
{x,}p} (
0,t_{n_{0}}^{0} ) \leq\alpha$ by the definition of the sequence
$ (
t_{j}^{l} ) $. Furthermore we have $\omega_{\mathbf{x,}p} (
t_{n_{l}}^{l},t_{1}^{l+1} ) \leq\alpha$ because $t_{n_{l}}^{l}$ and
$t_{1}^{l+1}$ are two consecutive points in $D$. Hence, we may deduce from~(\ref{keyestimate}) that%
\[
\sum_{j=1}^{n}\omega_{\mathbf{x,}p} (
t_{j-1},t_{j} ) \leq \bigl( 2N_{\alpha,I,p} ( \mathbf{x} )
+1 \bigr) \alpha.
\]
Because the right-hand side of the last inequality does not depend on $D$,
optimizing over all such partitions gives the stated result.
\end{pf}

As a direct consequence of Proposition~\ref{NandM} and Corollary
\ref{Jacobianbound} we have the estimate
%
\begin{eqnarray}\label{Jac}%
\quad\qquad\bigl\llvert J_{\cdot\leftarrow0}^{\mathbf{x}} ( y_{0} ) \bigr\rrvert
_{p\mbox{-}\operatorname{var}; [ 0,T ] }&\leq &C\llvert V\rrvert
_{\Lip\mbox{-}\gamma}\Vert
\mathbf{x}\Vert  _{p\mbox{-}\operatorname{var}; [ 0,T ] }
\nonumber
\\[-8pt]
\\[-8pt]
\nonumber
&&{}\times{}\exp \bigl[ C\max \bigl( 1,\alpha
|V|_{\Lip\mbox{-}\gamma
}^{p} \bigr) \bigl( 2N_{\alpha, [ 0,T ],p} (
\mathbf{x}%
) +1 \bigr) \bigr].
\end{eqnarray}
If we take $\mathbf{x=X}$ to be Gaussian rough path, then the tail of the
Jacobian can be studied via the tail of $N_{\alpha,I,p} ( \mathbf{X}
) $. This will be the objective of the remainder of the paper.

\begin{remark}
There are several ways to obtain bounds for the Jacobian in terms of
$N_{\alpha,I,p} ( \mathbf{x} ) $. An alternative approach suggested
by the anonymous referee uses the Gronwall estimate
\[
\bigl\llvert J_{\cdot\leftarrow0}^{\mathbf{x}} ( y_{0} ) \bigr\rrvert
_{\infty; [ 0,T ] }\leq C\exp \bigl( C\llVert X\rrVert _{p\mbox{-}\operatorname{var}; [ 0,T ] }^{p}
\bigr).
\]
Using the cocycle property
\[
J_{t\leftarrow0}^{\mathbf{x}} ( y_{0} ) =J_{t\leftarrow
s}^{\mathbf{x}}
\bigl( U_{s\leftarrow0}^{\mathbf{x}} ( y_{0} ) \bigr)
J_{s\leftarrow0}^{\mathbf{x}} ( y_{0} )
\]
a simple induction argument gives the following bound on the infinity
norm of
the Jacobian:%
\[
\bigl\llvert J_{\cdot\leftarrow0}^{\mathbf{x}} ( y_{0} ) \bigr\rrvert
_{\infty; [ 0,T ] }\leq C\exp \bigl( C\alpha^{p}%
N_{\alpha,I,p} ( \mathbf{x} ) \bigr).
\]
This argument may be generalized to cover the $p$-variation of the Jacobian;
see, for example,~\cite{FR} where the authors implement a variant of
this idea based on
a previous version of this paper.
\end{remark}

\section{Gaussian rough paths}\label{sectGRP}

The previous section developed the pathwise estimates on $J_{t\leftarrow
0}^{\mathbf{x}} ( y_{0} ) $ we need. We learned that the
$p$-variation of $J_{t\leftarrow0}^{\mathbf{x}} ( y_{0} ) $
can be
bounded explicitly in terms of $N_{\alpha, [ 0,T ],p} (
\mathbf{x} ) $. The importance of controlling $J_{t\leftarrow
0}^{\mathbf{x}} ( y_{0} ) $ using $N_{\alpha, [ 0,T ],p} ( \mathbf{x} ) $, as opposed to simpler alternatives [see, e.g.,
identity (\ref{standard})], is best appreciated when the
driving rough path is taken to be random. Henceforth, we will distinguish
situations where the path is random by writing it in upper-case:\vadjust{\goodbreak}
$\mathbf{X}$.
Of special interest is when $\mathbf{X}$ is the lift\footnote{Recall
that by
$\mathbf{X}$ being a lift of $X$, we mean that the projection of
$\mathbf{X}$
to the first tensor level is exactly $X$.} of some continuous $
\mathbb{R}
^{d}$-valued Gaussian process $ ( X_{t} ) _{t\in I}$. A theory of
such Gaussian rough paths has been developed by a succession of authors
\cite{CQ,FV07,CFV,FO10}, and we will mostly work within
their framework.

To be more precise, we will assume that $X_{t}= (
X_{t}^{1},\ldots,X_{t}%
^{d} ) $ is a continuous, centered (i.e., mean zero) Gaussian
process with
independent and identically distributed components. Let $R\dvtx I\times
I\rightarrow%
\mathbb{R}$ denote the covariance function of any component, that is,
\[
R ( s,t ) =E \bigl[ X_{s}^{1}X_{t}^{1}
\bigr].
\]
Throughout we will assume that this process is realized on the abstract Wiener
space $ ( \mathcal{W},\mathcal{H},\mu ) $ where $\mathcal{W} $
$=C_{0} ( I,
\mathbb{R}
^{d} ) $, the space of continuous $
\mathbb{R}
^{d}$-valued functions on $I$. More precisely we mean that $X$ is the
canonical process on $\mathcal{W}$; that is, $X_{t} ( \omega )
=\omega ( t ),$ and $ ( X_{t} ) _{t\in I}$ has the
required Gaussian distribution under $\mu$. We recall the notion of the
``rectangular increments of $R$'' from~\cite{FVupdate}; these are
defined by%
\[
R\pmatrix{
s,t
\vspace*{2pt}\cr
u,v}:=E \bigl[ \bigl( X_{t}^{1}-X_{s}^{1}
\bigr) \bigl( X_{v}%
^{1}-X_{u}^{1}
\bigr) \bigr].
\]
The existence of a lift for $X$ is guaranteed by insisting on a sufficient
rate of decay on the correlation of the increments. This is captured,
in a
very general way, by the following two-dimensional $\rho$-variation constraint
on the covariance function.

\begin{condition}
\label{rhovar} There exists of $1\leq\rho<2$ such that $R$ has
\textit{finite }$\rho$-\textit{variation} in the sense%
%
\begin{equation}
V_{\rho} ( R;I\times I ):=\lleft( \mathop{ \sup
_{D= (
t_{i} ) \in\mathcal{D} ( I ) }}_{D^{\prime}= (
t_{j}^{\prime} ) \in\mathcal{D} ( I ) }\sum_{i,j}
\biggl| R \pmatrix{
t_{i},t_{i+1}
\vspace*{2pt}\cr
t_{j}^{\prime},t_{j+1}^{\prime}%
} \biggr| ^{\rho} \rright)
^{{1}/{\rho}}<\infty.\label{2dvariation}%
\end{equation}
\end{condition}

\begin{remark}
\label{FVlift} Under Theorem 35, Condition~\ref{rhovar} of
\cite{FV07}, $ ( X_{t} ) _{t\in [ 0,T ] }$
lifts to a geometric $p$-rough path for any $p>2\rho$. Moreover, there
is a
unique \textit{natural lift} which is the limit (in the $d_{p\mbox{-}\operatorname{var}}$-induced
rough path topology) of  the canonical lift of piecewise linear
approximations to $X$.
\end{remark}

The following theorem  appears in~\cite{FV07} as Proposition 17; cf.
also the
recent note~\cite{FVupdate}. It shows how the assumption $V_{\rho} (
R; [ 0,T ] ^{2} ) <\infty$ allows us to embed~$\mathcal{H}$
in the space of continuous paths with finite $\rho$ variation. The
result, as
it appears in~\cite{FV07}, applies to one-dimensional Gaussian
processes. The
generalization to arbitrary finite dimensions is straightforward, and
we will
not elaborate on the proof.

\begin{theorem}[(\cite{FV07})]\label{CMpVarembedding}Let $ ( X_{t} ) _{t\in
I}= ( X_{t}^{1},\ldots,X_{t}^{d} ) _{t\in I}$ be a continuous,
mean-zero Gaussian process with independent and identically distributed
components. Let $R$ denote the covariance function of  (any) one of the
components. Then if $R$ is of finite $\rho$-variation for some $\rho\in
\lbrack1,2)$ we can embed $\mathcal{H}$ in the space $C^{\rho\mbox{-}\operatorname{var}}
( I,
\mathbb{R}
^{d} );$ in fact,%
%
\begin{equation}
\llvert h\rrvert _{\mathcal{H}}\geq\frac{\llvert  h\rrvert
_{\rho\mbox{-}\operatorname{var};I}}{\sqrt{V_{\rho} ( R;I\times I ) }%
}.\label{embedding}%
\end{equation}

\end{theorem}

\begin{remark}[(\cite{FV3})]\label{fBMembedding} Writing $\mathcal{H}^{H}$ for the
Cameron--Martin space of fBm for~$H$ in $ ( 1/4,1/2 ) $, the
variation embedding in~\cite{FV3} gives the stronger result that%
\[
\mathcal{H}^{H}\hookrightarrow C^{q\mbox{-}\operatorname{var}} \bigl( I,
\mathbb{R}
^{d} \bigr)\qquad \mbox{for
any }q> ( H+1/2 ) ^{-1}.
\]
\end{remark}

Once we have established a lift $\mathbf{X}$ of $X$ we will often want
to make
sense of $\mathbf{X} ( \omega+h ) $. The main technique used for
achieving this is to relate it to the translated rough path
$T_{h}\mathbf{x}$; recall Section~\ref{translate}. The the following
result appeared in
\cite{CFV} and demonstrates that, under certain conditions, $\mathbf
{X} (
\omega+h ) $ and $T_{h}\mathbf{X} ( \omega ) $ are equal for
all $h$ in $\mathcal{H}$ on a set of $\mu$-full measure.

\begin{lemma}
\label{translation} Let $ ( X_{t} ) _{t\in I}= ( X_{t}%
^{1},\ldots,X_{t}^{d} ) _{t\in I}$ be a mean-zero Gaussian process with
i.i.d. components. Assume that $X$ has a natural lift to a geometric $p$-rough path. Assume further that for some $q\geq1$ such that
$1/p+1/q>1$, we
have $\mathcal{H}\hookrightarrow C^{q\mbox{-}\operatorname{var}} ( I,%
\mathbb{R}
^{d} ) $. Then there exists a measurable subset $E\subseteq\mathcal{W}$
with $\mu ( E ) =1$, such that for all $\omega$ in $E$, we have
\[
T_{h}\mathbf{X} ( \omega ) \equiv\mathbf{X} ( \omega +h )\qquad\mbox{for all }h\mbox{ in }\mathcal{H}.
\]
\end{lemma}

From the different choices of $p$ and $q$ with the properties that $X$
 lifts
path in $G\Omega_{p} (
\mathbb{R}
^{d} ) $ and $\mathcal{H}$ continuously embeds in $C^{q\mbox{-}\operatorname{var}%
} ( I,%
\mathbb{R}
^{d} ),$ it will often prove useful to work with a particular choice
that satisfies certain constraints. The purpose of the next lemma is to show
that these constraints can always be satisfied for some $p$ and $q,$
for the
examples of Gaussian processes that will interest us most.

\begin{corollary}
\label{conditions}Let $ ( X_{t} ) _{t\in I}= ( X_{t}%
^{1},\ldots,X_{t}^{d} ) _{t\in I}$ be a continuous, mean-zero Gaussian
process with i.i.d. components on $ ( \mathcal{W},\mathcal{H},\mu
)
$. Suppose that at least one of the following holds:

\begin{longlist}[(1)]
\item[(1)] For some $\rho$ in $[1,\frac{3}{2})$ the covariance function
of $X$ has
finite $\rho$-variation, in the sense of Condition~\ref{rhovar};

\item[(2)] $X$ is a fractional Brownian motion for $H$ in $ (
1/4,1/2 )$.
\end{longlist}

Then there exist real numbers $p,q$ such that the following statements are
true simultaneously:\vadjust{\goodbreak}

\begin{longlist}[(1)]
\item[(1)] $X$ has a natural lift to a geometric $p$-rough path;

\item[(2)] $\mathcal{H}\hookrightarrow C^{q\mbox{-}\operatorname{var}} ( I,
\mathbb{R}
^{d} ) $ where $1/p+1/q>1$.
\end{longlist}
\end{corollary}

\begin{pf}
If Condition~\ref{rhovar} is satisfied with $\rho\in\lbrack1,3/2)$, then
(taking $\frac{1}{0}:=\infty)$
\[
2\rho<3<\frac{\rho}{\rho-1}.
\]
If we therefore set $q=\rho$ and choose $p$ in $ ( 2q,3 ),$ Remark
\ref{FVlift} guarantees the existence of a natural lift for $X$. Furthermore,
Theorem~\ref{CMpVarembedding} ensures that $\mathcal{H}\hookrightarrow
C^{q\mbox{-}\operatorname{var}} ( I,
\mathbb{R}
^{d} ) $.

In the case where $X$ is fBm let $4>p>\frac{1}{H},$ and then Remark~\ref{FVlift}
guarantees that $X$ lifts to a geometric $p$-rough path. Let $q= (
\frac{1}{p}+\frac{1}{2} ) ^{-1}$. Then we have
\[
\biggl( H+\frac{1}{2} \biggr) ^{-1}<q
\]
and
\[
\frac{1}{p}+\frac{1}{q}=\frac{2}{p}+\frac{1}{2}>1.
\]
The fact that $\mathcal{H}\hookrightarrow C^{q\mbox{-}\operatorname{var}} ( I,
\mathbb{R}
^{d} ) $ now follows by Remark~\ref{fBMembedding}.
\end{pf}

\section{\texorpdfstring{The tail behavior of $N_{\alpha,I,p} (\mathbf{X} (\cdot))$ via Gaussian isoperimetry}
{The tail behavior of N alpha,I,p (X (.)) via Gaussian isoperimetry}}

We continue to work in the setting of an abstract Wiener space $ (
\mathcal{W},\mathcal{H},\mu ) $. If $\mathcal{K}$ denotes the unit ball
in $\mathcal{H}$, then for any $A\subseteq\mathcal{W}$ we can consider the
Minkowski sum%
\[
A+r\mathcal{K}:= \{ x+ry\dvtx x\in A,y\in\mathcal{K} \}.
\]
We then recall the following isoperimetric inequality of C. Borell; cf.
Theorem 4.3 of~\cite{Led}.

\begin{theorem}[(Borell)]Let $ ( \mathcal{W},\mathcal{H},\mu ) $ be an abstract
Wiener space and $\mathcal{K}$ denote the unit ball in $\mathcal{H}$. Suppose
$A$ is a Borel subset of $\mathcal{W}$ such that $\mu ( A )
\geq\Phi ( a ) $ for some real number $a$. Then for every
$r\geq0$,%
\[
\mu_{\ast} ( A+r\mathcal{K} ) \geq\Phi ( a+r ),
\]
where $\mu_{\ast}$ is the inner measure of $\mu,$ and $\Phi$ denotes the
standard normal cumulative distribution function.
\end{theorem}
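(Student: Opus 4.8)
The plan is to deduce this infinite-dimensional statement from the classical finite-dimensional Gaussian isoperimetric inequality together with a cylindrical approximation in Wiener space. Since the result is classical, in the body of the paper we simply invoke it as stated (cf. Theorem 4.3 of \cite{Led}); what follows is the route by which it is customarily established, together with the point at which the real work lies.

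\emph{The finite-dimensional inequality.} Writing $\gamma_n$ for the standard Gaussian measure on $\mathbb{R}^n$ and $B$ for the closed Euclidean unit ball, the claim to prove is that $\gamma_n(A)\geq\Phi(a)$ implies $\gamma_n(A+rB)\geq\Phi(a+r)$ for every Borel $A\subseteq\mathbb{R}^n$ and every $r\geq 0$, with equality attained by the half-spaces $\{x:\langle x,u\rangle\leq a\}$, $|u|=1$. The classical derivation passes through spherical isoperimetry: geodesic caps minimise the normalised surface measure of the $\varrho$-neighbourhood among all subsets of the sphere $S^{N-1}(\sqrt{N})$ of prescribed measure (L\'{e}vy--Schmidt), and the image of the uniform measure on $S^{N-1}(\sqrt{N})$ under projection onto the first $n$ coordinates converges weakly to $\gamma_n$ as $N\to\infty$ (Poincar\'{e}'s limit). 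Comparing a Euclidean $r$-enlargement in the projected coordinates with a geodesic enlargement on the large sphere and passing to the limit delivers the Gaussian inequality. An alternative that bypasses the sphere is to derive the enlargement inequality from Ehrhard's inequality $\Phi^{-1}\big(\gamma_n(\lambda A+(1-\lambda)C)\big)\geq\lambda\,\Phi^{-1}(\gamma_n(A))+(1-\lambda)\,\Phi^{-1}(\gamma_n(C))$ by taking $C$ a large centred ball and letting it grow, or from Bobkov's functional inequality via a semigroup argument.

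\emph{Lifting and the main obstacle.} Fix an orthonormal basis $(e_k)_{k\geq1}$ of $\mathcal{H}$ realised inside $\mathcal{W}^{*}$ under the usual identification, and let $\pi_n:\mathcal{W}\to\mathbb{R}^n$ be the associated projection, so that $\pi_n$ pushes $\mu$ forward to $\gamma_n$ and carries the Cameron--Martin unit ball $\mathcal{K}$ \emph{onto} the Euclidean ball $B$. Given Borel $A$ with $\mu(A)\geq\Phi(a)$ and $\varepsilon>0$, approximate $A$ from inside by a cylinder set $\pi_n^{-1}(A_n)$ with $\gamma_n(A_n)\geq\Phi(a)-\varepsilon=:\Phi(a_\varepsilon)$ and $\pi_n^{-1}(A_n)\subseteq A$ up to a $\mu$-null set; since $\pi_n(\mathcal{K})=B$ we have $\pi_n^{-1}(A_n)+r\mathcal{K}\subseteq\pi_n^{-1}(A_n+rB)$, and the latter set is measurable, so the finite-dimensional inequality gives $\mu_{\ast}(A+r\mathcal{K})\geq\gamma_n(A_n+rB)\geq\Phi(a_\varepsilon+r)$; letting $\varepsilon\downarrow0$, so that $a_\varepsilon\uparrow a$, yields the claim. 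The inner measure is genuinely needed, since $A+r\mathcal{K}$ need not be $\mu$-measurable for general Borel $A$. The substantive difficulty is entirely in the finite-dimensional step, and within it in making the Poincar\'{e}-limit comparison rigorous: one must quantify how a Euclidean $r$-ball enlargement in $n$ fixed coordinates is reflected by a geodesic enlargement on $S^{N-1}(\sqrt{N})$ and check that the discrepancy vanishes as $N\to\infty$. The cylindrical approximation and the bookkeeping with $\mu_{\ast}$ are then routine.
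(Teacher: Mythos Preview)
The paper does not prove Borell's inequality at all; it simply states it and cites Theorem~4.3 of \cite{Led}, exactly as you anticipate in your opening line. So at the level of what the paper actually does, your proposal matches it perfectly.

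Since you go further and sketch the classical argument, a remark on the lifting step: the approximation ``from inside by a cylinder set $\pi_n^{-1}(A_n)\subseteq A$ up to a null set'' is not generally available---cylinder sets are dense for the symmetric-difference pseudometric but need not sit inside a given Borel $A$. The standard route is to first reduce to compact $A$ via inner regularity (then $A+r\mathcal{K}$ is compact, since $\mathcal{K}$ is compact in $\mathcal{W}$), set $A_n=\pi_n(A)$ so that $A\subseteq\pi_n^{-1}(A_n)$ and $\gamma_n(A_n)\geq\Phi(a)$, and then show
\[
A+r\mathcal{K}=\bigcap_{n}\pi_n^{-1}\bigl(\pi_n(A)+rB\bigr),
\]
whence $\mu(A+r\mathcal{K})=\lim_n\gamma_n(A_n+rB)\geq\Phi(a+r)$. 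Also, the inclusion you write, $\pi_n^{-1}(A_n)+r\mathcal{K}\subseteq\pi_n^{-1}(A_n+rB)$, is the wrong direction for a lower bound; what you actually need (and what $\pi_n(\mathcal{K})=B$ gives you) is the reverse inclusion, though in fact both hold and the two sets coincide. These are bookkeeping issues rather than a conceptual error; your identification of where the real content lies (the finite-dimensional isoperimetric step) is correct.
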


The next proposition is crucial. It will allow us to apply Borell's inequality
to control the tail of the random variable $N_{\alpha,I,p} (
\mathbf{X} ( \omega )  ) $.

\begin{proposition}
\label{borellestimate}Let $ ( X_{t} ) _{t\in I}= ( X_{t}%
^{1},\ldots,X_{t}^{d} ) _{t\in I}$ be a continuous, mean-zero Gaussian
process, parameterized over a compact interval $I$ on the abstract Wiener
space $ ( \mathcal{W},\mathcal{H},\mu ) $. Suppose that $p$
and $q$
are real numbers such that $1\leq p<4$ and $1/p+1/q>1$. Assume further that:\vadjust{\goodbreak}

\begin{longlist}[(1)]
\item[(1)] $X$ has a natural lift to a geometric $p$-rough path $\mathbf{X;}$

\item[(2)] $\mathcal{H}\hookrightarrow C^{q\mbox{-}\operatorname{var}} ( I,
\mathbb{R}
^{d} ) $.
\end{longlist}

Then there exists a set $E\subseteq\mathcal{W}$, of $\mu$-full measure, with
the following property: for all $\omega$ in $E,h$ in $\mathcal{H}$ and
$\alpha>0,$ if
\[
\bigl\Vert \mathbf{X} ( \omega-h ) \bigr\Vert
_{p\mbox{-}\operatorname{var};I}\leq\alpha,
\]
then
\[
\llvert h\rrvert _{q\mbox{-}\operatorname{var};I}\geq \alpha
N_{\tilde{\alpha}^{p},I,p} \bigl( \mathbf{X} ( \omega ) \bigr) ^{1/q},
\]
where $C_{p,q}$ is the constant in Lemma~\ref{translationestimates} and
$\widetilde{\alpha}= ( 2C_{p,q} ) ^{1/p}\alpha$.
\end{proposition}

\begin{pf}
Fix $\alpha>0$. We first note that the case $N_{\tilde{\alpha}^{p},I,p} (
\mathbf{X} ( \omega )  ) =0$ is trivial. Hence we will assume
in the following that $N_{\tilde{\alpha}^{p},I,p} ( \mathbf{X} (
\omega )  ) \geq1$. From the definition of the sequence $ (
\tau_{i} ( \tilde{\alpha}^{p} )  ) _{i=0}^{\infty}$ and the
integer $N_{\tilde{\alpha}^{p},I,p} ( \mathbf{X} ( \omega )
) $ we have for $i=0,1,2,\ldots,N_{\tilde{\alpha}^{p},I,p} (
\mathbf{X} ( \omega )  ) -1$
%
\begin{equation}
\bigl\Vert \mathbf{X} ( \omega ) \bigr\Vert
_{p\mbox{-}\operatorname{var}; [ \tau_{i} ( \tilde{\alpha}^{p} ),\tau
_{i+1} ( \tilde{\alpha}^{p} )  ] }=\widetilde{\alpha }\label{stoppingtime}.%
\end{equation}
Consider the (measurable) subset of $\mathcal{W}$ defined by
\[
E:= \bigl\{ \omega\in\mathcal{W}\dvtx T_{h}\mathbf{X} ( \omega ) =
\mathbf{X} ( \omega+h )\ \forall h\in\mathcal{H} \bigr\},
\]
and recall from Lemma~\ref{translation} that $\mu ( E ) =1$. For
every $\omega$ in $E$ define a subset $F_{\alpha,\omega}$ of $\mathcal
{H}$ by%
\[
F_{\alpha,\omega}:= \bigl\{ h\in\mathcal{H}\dvtx\bigl\Vert
\mathbf{X} ( \omega-h ) \bigr\Vert _{p\mbox{-}\operatorname{var};
I}\leq\alpha \bigr
\}.
\]

Using the estimate in Lemma~\ref{translationestimates} we have for any
$\omega$ in $E$%
\begin{eqnarray*}
\bigl\Vert \mathbf{X} ( \omega )\bigr\Vert
_{p\mbox{-}\operatorname{var}; [ \tau_{i} ( \tilde{\alpha}^{p} ),\tau
_{i+1} ( \tilde{\alpha}^{p} )  ] }^{p} & =&\bigl\Vert T_{h}
\mathbf{X} ( \omega-h ) \bigr\Vert _{p\mbox{-}\operatorname{var}; [ \tau_{i} (\tilde{\alpha}^{p} ),\tau
_{i+1} ( \tilde{\alpha}^{p} )  ] }^{p}
\\
& \leq& C_{p,q} \bigl( \bigl\Vert T_{h}
\mathbf{X} ( \omega )\bigr\Vert _{p\mbox{-}\operatorname{var};I}^{p}+
\llvert h\rrvert _{q\mbox{-}\operatorname{var}; [ \tau_{i} ( \tilde{\alpha}^{p} ),\tau_{i+1} ( \tilde{\alpha}^{p} )  ] }^{p} \bigr).
\end{eqnarray*}
Hence, for any $\omega$ in $E$, $h$ in $F_{\alpha,\omega}$ we have%
%
\begin{equation}
\widetilde{\alpha}^{p}\leq C_{p,q} \bigl(
\alpha^{p}+\llvert h\rrvert _{q\mbox{-}\operatorname{var}; [ \tau_{i} ( \tilde{\alpha}^{p} ),\tau
_{i+1} ( \tilde{\alpha}^{p} )  ] }^{p} \bigr).
\label{est} 
\end{equation}
Substituting $ ( 2C_{p,q} ) ^{1/p}\alpha$ for $\widetilde
{\alpha}$, estimate (\ref{est}) becomes%
\[
\llvert h\rrvert _{q\mbox{-}\operatorname{var}; [ \tau_{i} (\tilde{\alpha}^{p} ),\tau_{i+1} ( \tilde{\alpha}^{p} )  ] }^{q}\geq
\alpha^{q}.
\]
Summing over $i=0,1,\ldots,N_{\widetilde{\alpha},I,p} ( \mathbf{X}%
( \omega )  ) -1$ then gives
\[
\llvert h\rrvert _{q\mbox{-}\operatorname{var};I}^{q}\geq\sum
_{i=0}%
^{N_{\tilde{\alpha}^{p},I,p} ( \mathbf{X} ( \omega )  )
-1}\llvert h\rrvert
_{q\mbox{-}\operatorname{var}; [ \tau_{i} (
\tilde{\alpha}^{p} ),\tau_{i+1} ( \tilde{\alpha}^{p} )
] }^{q}\geq
\alpha^{q}N_{\tilde{\alpha}^{p},I,p} \bigl( \mathbf{X} ( \omega ) \bigr),
\]
which yields the desired estimate.
\end{pf}

By using these estimates in concert with Borell's inequality
we are
lead directly to the following theorem which describes the needed tail-estimate
on the random variable $N_{\tilde{\alpha}^{p},I,p} (
\mathbf{X} ( \cdot )  ) $.

\begin{theorem}
\label{maintailestimate}Let $ ( X_{t} ) _{t\in I}= (
X_{t}^{1},\ldots,X_{t}^{d} ) _{t\in I}$ be a continuous, mean-zero Gaussian
process, parameterized over a compact interval $I,$ on the abstract Wiener
space $ ( \mathcal{W},\mathcal{H},\mu ) $. Suppose that $p$
and $q$
are real numbers satisfying $1\leq p<4$ and $1/p+1/q>1$. Assume that:

\begin{longlist}[(1)]
\item[(1)] $X$ has a natural lift to a geometric $p$-rough path $\mathbf{X;}$

\item[(2)] $\mathcal{H}\hookrightarrow C^{q\mbox{-}\operatorname{var}} ( I,
\mathbb{R}
^{d} ),$ so that there exists $C_{\mathrm{emb}}$ in $ ( 0,\infty
) $
with $\llvert  h\rrvert _{q\mbox{-}\operatorname{var};I}\leq C_{\mathrm{emb}}\llvert
h\rrvert _{\mathcal{H}}$ for all $h$ in $\mathcal{H}$.
\end{longlist}

Let $C_{p,q}$ be the constant in Lemma~\ref{translationestimates}. Then for
all $\alpha>0$ the natural lift $\mathbf{X}$ of $X$ to a geometric $p$-rough
path satisfies
%
\begin{equation}
\mu \bigl\{ \omega\dvtx N_{\tilde{\alpha}^{p},I,p} \bigl( \mathbf{X} ( \omega ) \bigr) >n
\bigr\} \leq C_{1}\exp \biggl[ \frac{-\alpha^{2}n^{2/q}}{2C_{\mathrm{emb}}^{2}} \biggr]
\label{requiredtailbound}%
\end{equation}
for all $n\geq1,$ where $\tilde{\alpha}= ( 2C_{p,q} ) ^{1/p}%
\alpha$. The constant $C_{1}$ is given explicitly by
%
\begin{equation}
C_{1}=\exp \bigl[ 2\Phi^{-1} \bigl( \mu ( A_{\alpha}
) \bigr) ^{2} \bigr],\label{event}%
\end{equation}
where $\Phi^{-1}$ is the inverse of the standard normal cumulative
distribution function and%
\[
A_{\alpha}:= \bigl\{ \omega\in\mathcal{W}\dvtx\bigl\Vert
\mathbf{X}%
( \omega )\bigr \Vert _{p\mbox{-}\operatorname{var};I}\leq
\alpha \bigr\}.
\]

\end{theorem}

\begin{pf}
By applying Proposition~\ref{borellestimate} together with hypothesis
2, we
can deduce that
%
\begin{equation}
\bigl\{ \omega\dvtx N_{\tilde{\alpha}^{p},I,p} \bigl( \mathbf{X} ( \omega ) \bigr) >n
\bigr\} \cap E\subset\mathcal{W\setminus} ( A_{\alpha}+r_{n}
\mathcal{K} ), \label{borell-prep}%
\end{equation}
where $E\subseteq\mathcal{W}$ with $\mu ( E ) =1$ and
\[
r_{n}:=\frac{\alpha n^{1/q}}{C_{\mathrm{emb}}}.
\]
Noticing that $\mu ( A_{\alpha} ) =:\Phi ( a_{\alpha} )
$ is in $ ( 0,1 ) $ [i.e., $a_{\alpha}$ is in $ (
-\infty,\infty ) $] an application of Borell's inequality then gives
that%
%
\begin{equation}
\mu \bigl\{ \omega\dvtx N_{\tilde{\alpha}^{p},I,p} \bigl( \mathbf{X} ( \omega ) \bigr) >n
\bigr\} \leq1-\Phi ( a_{\alpha}+r_{n} ) \leq\exp \biggl[ -
\frac{ ( a_{\alpha}+r_{n} ) ^{2}}{2} \biggr].\label{borellbound}%
\end{equation}
If $a_{\alpha}>-r_{n}/2$, then (\ref{borellbound}) implies%
\[
\mu \bigl\{ \omega\dvtx N_{\tilde{\alpha}^{p},I,p} \bigl( \mathbf{X} ( \omega ) \bigr) >n
\bigr\} \leq\exp \biggl( -\frac{r_{n}^{2}}%
{8} \biggr).
\]
Alternatively if $a_{\alpha}\leq-r_{n}/2$ then $r_{n}^{2}\leq4a_{\alpha}^{2},$
 and it is easy to see that%
\begin{eqnarray*}
\mu \bigl\{ \omega\dvtx N_{\tilde{\alpha}^{p},I,p} \bigl( \mathbf{X} ( \omega ) \bigr) >n
\bigr\} &\leq&\exp \biggl( -\frac{a_{\alpha}%
^{2}+2a_{\alpha}r_{n}}{2} \biggr) \exp \biggl( -
\frac{r_{n}^{2}}{2} \biggr)\\
& \leq&\exp \bigl( 2a_{\alpha}^{2}
\bigr) \exp \biggl( -\frac{r_{n}^{2}}%
{2} \biggr).
\end{eqnarray*}
Since $a_{\alpha}=$ $\Phi^{-1} ( \mu ( A_{\alpha} )
) $
we have shown the required estimate (\ref{requiredtailbound}).\vadjust{\goodbreak}
\end{pf}

\begin{remark}
Suppose that for some $\rho$ in $[1,\frac{3}{2})$ the covariance
function of~$X$ has finite $\rho$-variation (in the sense of Condition \ref
{rhovar}). In
this case we deduce from Corollary~\ref{conditions} and Theorem
\ref{CMpVarembedding} that $q=\rho$ and $p\in ( 2\rho,3 ) $
satisfy the hypothesis of Theorem~\ref{maintailestimate} with the embedding
constant given explicitly by
\[
C_{\mathrm{emb}}=\sqrt{V_{\rho} ( R;I\times I ) }.
\]
Hence, the tail estimates just proved lead to moment estimates on\break
$N_{\alpha,I,p} ( \mathbf{X} ( \omega )  ) $ in the
usual way. This leads to the conclusion that for any $\alpha>0,$ and
$\eta$
satisfying
\[
\eta<\frac{\alpha^{2}}{2V_{\rho} ( R;I\times I ) },
\]
we have
%
\begin{equation}
\int_{\mathcal{W}}\exp \bigl[ \eta N_{\tilde{\alpha}^{p},I,p} \bigl(
\mathbf{X} ( \omega ) \bigr) ^{2/\rho} \bigr] \mu ( d\omega ) <\infty.
\label{moment}%
\end{equation}
For the Brownian rough path (i.e., $\rho=1$) this shows that $N_{\alpha,I,p} ( \mathbf{X} ( \omega )  ) $ has a Gaussian tail
since in this case we have $\log\llvert  J_{t\leftarrow0}^{\mathbf
{X} (
\omega ) } ( y_{0} ) \rrvert \lesssim N_{\alpha,I,p} ( \mathbf{X} ( \omega )  ) $. Rudimentary It\^{o}
or Stratonovich calculus tells us that we cannot expect the tail of
$N_{\alpha,I,p} ( \mathbf{X} ( \omega )  ) $ to decay
any faster than Gaussian, suggesting a degree of sharpness to our approach.
By a similar argument, we can show that for any $r<2/\rho$
\[
\exp \bigl[ N_{\alpha,I,p} \bigl( \mathbf{X} ( \cdot ) \bigr) ^{r}
\bigr]\qquad \mbox{is in } \bigcap_{q>0}L^{q} ( \mu );
\]
and similar calculations can be performed in the fractional Brownian
setting too.
\end{remark}

\begin{theorem}[(Moment estimates on the Jacobian)]Let $ ( X_{t} ) _{t\in [
0,T ] }= ( X_{t}^{1},\ldots,X_{t}^{d} ) _{t\in [
0,T ] }$ be a continuous, mean-zero Gaussian process with i.i.d.
components associated to the abstract Wiener space $ ( \mathcal{W},\mathcal{H},\mu ) $. Let $\rho$ be in $[1,\frac{3}{2}),$ $p$ in
$ ( 2\rho,3 ) $ and $\gamma>p$. Suppose that the covariance
function of $X$ has finite $\rho$-variation in the sense of Condition
\ref{rhovar}. Then $X$ lifts to a geometric $p$-rough path $\mathbf
{X,}$ and
for any collection of $\Lip\mbox{-}\gamma$ vector fields $V= ( V^{1},\ldots,V^{d} ) $ on $
\mathbb{R}
^{e}$ with $\gamma>p$ the RDE%
\[
dY_{t}=V ( Y ) \,d\mathbf{X},\qquad Y ( 0 ) =y_{0}%
\]
has a unique solution. The flow $U_{t\leftarrow0}^{\mathbf{X} (
\omega ) } ( \cdot ) $ induced by the solution to this
RDE is
differentiable. Let this derivative be given by
\[
J_{t\leftarrow0}^{\mathbf{X} ( \omega ) } ( y_{0} ) \cdot a:= \biggl\{
\frac{d}{d\varepsilon}U_{t\leftarrow0}^{\mathbf{X} (
\omega ) } ( y_{0}+\varepsilon a
) \biggr\} _{\varepsilon
=0}.
\]
And let $M_{\mathbf{X} ( \cdot ) }^{ ( y_{0},V )
}\dvtx\mathcal{W\rightarrow
\mathbb{R}
}_{+}$ denote the random variable%
\[
M_{\mathbf{X} ( \cdot ) }^{ ( y_{0},V ) } ( \omega ) \equiv M_{\mathbf{X} ( \omega ) }^{ (
y_{0},V ) }:=
\bigl\llvert J_{t\leftarrow0}^{\mathbf{X} (
\omega ) } ( y_{0} ) \bigr\rrvert
_{p\mbox{-}\operatorname{var}; [
0,T ] }.
\]
Then for all $y_{0}$ in $
\mathbb{R}
^{e}$ and all $r<2/\rho$ we have
\[
\exp \bigl[ \bigl( \log M_{\mathbf{X} ( \cdot ) }^{ (
y_{0},V ) } \bigr) ^{r}
\bigr] \qquad\mbox{is in } \bigcap_{q>0}%
L^{q} ( \mu ).
\]
\end{theorem}

\begin{pf}
Remark~\ref{FVlift} guarantees the existence of a unique natural lift
$\mathbf{X}$ for~$X$. Furthermore, we know that if $V= ( V^{1},\ldots,V^{d} ) $ is any collection of $\Lip\mbox{-}\gamma$ vector fields (and
$\gamma>p),$ then the solution flow obtained by driving $\mathbf{X}$
along $V$
is differentiable. Lemma~\ref{Jacobianbound} and Proposition~\ref{NandM}
together yield (\ref{Jac}) from which it follows that for any $\alpha
>0$ and
$y_{0}$ in $
\mathbb{R}
^{e}$
\[
M_{\mathbf{X} ( \cdot ) }^{ ( y_{0},V ) }\leq c_{1}\bigl\Vert
\mathbf{X} ( \omega ) \bigr\Vert _{p\mbox{-}\operatorname{var}; [ 0,T ] }\exp \bigl[
c_{1}N_{\alpha,I,p} \bigl( \mathbf{X} ( \omega ) \bigr) \bigr],
\]
where $I= [ 0,T ] $ and $c_{1}$ is a nonrandom constant which
depends on $\alpha,p,\gamma$ and $\llvert  V\rrvert _{\Lip\mbox
{-}\gamma
}$. Without loss of generality we take $c_{1}>1$. Then for three further
(again nonrandom) constants $c_{2}$ and $c_{3}$ an easy calculation
gives%
\begin{eqnarray*}
\bigl( \log M_{\mathbf{X} ( \cdot ) }^{ ( y_{0},V )
} \bigr) ^{r} & \leq&
c_{2}+c_{3} \bigl( \log\bigl\Vert \mathbf{X}
( \omega ) \bigr\Vert _{p\mbox{-}\operatorname{var}; [
0,T ] } \bigr) ^{r}+c_{4}N_{\alpha,I,p}
\bigl( \mathbf{X} ( \omega ) \bigr) ^{r}
\\
& \leq &c_{5}+c_{3}\log\bigl\Vert
\mathbf{X} ( \omega ) \bigr\Vert _{p\mbox{-}\operatorname{var}; [ 0,T ] }^{r}+c_{4}N_{\alpha,I,p}
\bigl( \mathbf{X} ( \omega ) \bigr) ^{r}.%
\end{eqnarray*}
Hence, we have
%
\begin{equation}
\exp \bigl[ \bigl( \log M_{\mathbf{X} ( \cdot ) }^{ (
y_{0},V ) } \bigr) ^{r}
\bigr] \leq c_{5}\bigl\Vert \mathbf{X} ( \omega ) \bigr\Vert_{p\mbox{-}\operatorname{var}; [
0,T ] }^{c_{3}r}\exp \bigl[
c_{4}N_{\alpha,I,p} \bigl( \mathbf {X} ( \omega ) \bigr)
^{r} \bigr].\label{relation}%
\end{equation}
By Theorem~\ref{maintailestimate} and the remark following it, the random
variable%
\[
\exp \bigl[ c_{5}N_{\alpha,I,p} \bigl( \mathbf{X} ( \omega )
\bigr) ^{r} \bigr]
\]
on the right-hand side of (\ref{relation}) is $L^{q} ( \mu ) $ for
all $q>0$ provided $r<2/\rho$.  On the other hand $\Vert
\mathbf{X} ( \omega )\Vert _{p\mbox{-}\operatorname{var}; [
0,T ] }$ has a Gaussian tail (see~\cite{FV}), and hence also has finite
moments of all order.  Using these two observations together with
Cauchy--Schwarz inequality in (\ref{relation}) gives the desired conclusion.
\end{pf}

The above result applies (in particular) to fractional Brownian motion,
$H>1/3$. But in the case of fBm we can leverage the specific embedding
properties to state an alternative version of the theorem which applies when
$H>1/4$.

\begin{theorem}[(Fractional Brownian motion)]Let $ ( X_{t} ) _{t\in [
0,T ] }= ( X_{t}^{1},\ldots,\break  X_{t}^{d} ) _{t\in [
0,T ] }$ be fractional Brownian motion with Hurst parameter
$H>1/4$. Let
$ ( \mathcal{W},\mathcal{H},\mu ) $ denote the abstract associated
with $X$. Let $\gamma>p>1/H$. Then~$X$ lifts to a geometric $p$-rough path
$\mathbf{X}$, and if\vadjust{\goodbreak} $V= ( V^{1},\ldots,V^{d} ) $ is a collection of
$\Lip\mbox{-}\gamma$ vector fields on $
\mathbb{R}^{e}$, the RDE%
\[
dY_{t}=V ( Y ) \,d\mathbf{X},\qquad Y ( 0 ) =y_{0}%
\]
has a unique solution. The flow $U_{t\leftarrow0}^{\mathbf{X} (
\omega ) } ( \cdot ) $ induced by the solution to this
RDE is
differentiable. Let this derivative be given by
\[
J_{t\leftarrow0}^{\mathbf{X} ( \omega ) } ( y_{0} ) \cdot a:= \biggl\{
\frac{d}{d\varepsilon}U_{t\leftarrow0}^{\mathbf{X} (
\omega ) } ( y_{0}+\varepsilon a
) \biggr\} _{\varepsilon
=0},
\]
and let $M_{\mathbf{X} ( \cdot ) }^{ ( y_{0},V )
}\dvtx\mathcal{W\rightarrow
\mathbb{R}
}_{+}$ denote the random variable%
\[
M_{\mathbf{X} ( \cdot ) }^{ ( y_{0},V ) } ( \omega ) \equiv M_{\mathbf{X} ( \omega ) }^{ (
y_{0},V ) }:=
\bigl\llvert J_{t\leftarrow0}^{\mathbf{X} (
\omega ) } ( y_{0} ) \bigr\rrvert
_{p\mbox{-}\operatorname{var}; [
0,T ] }.
\]

Then for any $r<2H+1$, we have that
\[
\exp \bigl[ \bigl( \log M_{\mathbf{X} ( \cdot ) }^{ (
y_{0},V ) } \bigr) ^{r}
\bigr] \mbox{ is in} \bigcap_{q>0}%
L^{q} ( \mu )
\]
for all $y_{0}$ in $
\mathbb{R}
^{e}$.
\end{theorem}

\begin{pf}
The argument is the similar to that of the last theorem; we have to
verify the
hypothesis of Theorem~\ref{maintailestimate}. Notice first that if
$r_{1}<r_{2}$, then a simple calculation gives
\[
\exp \bigl[ \bigl( \log M_{\mathbf{X} ( \cdot ) }^{ (
y_{0},V ) } \bigr) ^{r_{1}}
\bigr] \leq1+\exp \bigl[ \bigl( \log M_{\mathbf{X} ( \cdot ) }^{ ( y_{0},V ) } \bigr)
^{r_{2}} \bigr].
\]
It is therefore sufficient to prove the result for $1<r<2H+1$. Fix such any
such~$r$. The fact that $X$  lifts to a geometric $p$-rough path for any
$p>H^{-1}$ is by now a familiar one. Since for any $\tilde{p}>p$ we have
\[
\bigl\llvert J_{t\leftarrow0}^{\mathbf{X} ( \omega ) } ( y_{0} ) \bigr\rrvert
_{\tilde{p}\mbox{-}\operatorname{var}; [ 0,T ] }%
\leq\bigl\llvert J_{t\leftarrow0}^{\mathbf{X} ( \omega ) } (
y_{0} ) \bigr\rrvert _{p\mbox{-}\operatorname{var}; [ 0,T ] },
\]
it is sufficient to prove the result for any $p$ satisfying%
\[
H^{-1}<p<\max \bigl[ 4,2 ( r-1 ) ^{-1} \bigr].
\]
Fix any such $p$ in this interval and let $q$ be given by
\[
q= \biggl( \frac{1}{p}+\frac{1}{2} \biggr) ^{-1}.
\]
It follows that
\[
r<\frac{2}{q}<1+2H.
\]
The calculations of Corollary~\ref{conditions} then ensure that
$\mathcal{H}%
\hookrightarrow C^{q\mbox{-}\operatorname{var}} (  [ 0,T ],
\mathbb{R}
^{d} ) $. This allows us to apply Theorem~\ref{maintailestimate} to
deduce that
%
\begin{equation}
\exp \bigl[ N_{\alpha,I,p} \bigl( \mathbf{X} ( \omega ) \bigr) ^{r}
\bigr] \label{nbound1}%
\end{equation}
is $\mu$-integrable. The result then follows by repeating the steps of the
proof of the previous theorem.\vadjust{\goodbreak}
\end{pf}

\begin{remark}
In particular these results imply (under the stated conditions) that
$\llvert  J_{t\leftarrow0}^{\mathbf{X} ( \omega ) } (
y_{0} ) \rrvert _{p\mbox{-}\operatorname{var}; [ 0,T ] }$ has finite
moments of all order.
\end{remark}

\section*{Acknowledgements}
We would like to thank the anonymous referee for a number of
suggestions that helped to improve the presentation of this
paper significantly.

%


\printaddresses

\end{document}